\newtheorem{theorem}{Theorem}[section]
\newtheorem{lemma}[theorem]{Lemma}
\newtheorem{proposition}[theorem]{Proposition}
\newtheorem{remark}[theorem]{Remark}
\newtheorem{definition}[theorem]{Definition}
\newtheorem{claim}[theorem]{Claim}
\newtheorem{fact}[theorem]{Fact}
\newcommand{\bbE}{{\mathbb E}}
\newcommand{\bbH}{{\mathbb H}}
\newcommand{\cA}{{\mathcal A}}
\newcommand{\cC}{{\mathcal C}}
\newcommand{\cM}{{\mathcal M}}
\newcommand{\cQ}{{\mathcal Q}}
\newcommand{\cR}{{\mathcal R}}
\newcommand{\cS}{{\mathcal S}}
\newcommand{\cT}{{\mathcal T}}
\newcommand{\del}{\delta}
\newcommand{\Del}{\Delta}
\newcommand{\ep}{\varepsilon}
\newcommand{\e}{\varepsilon}
\newcommand{\alp}{\alpha}
\newcommand{\eps}{\varepsilon}
\newcommand{\g}{\gamma}
\newcommand{\sig}{\sigma}
\title{Tiling $3$-uniform hypergraphs with $K_4^3-2e$}
\author{Andrzej Czygrinow}\address{School of Mathematical and Statistical Sciences, Arizona State University, Tempe, AZ 85287, USA.}
\email{andrzej.czygrinow@asu.edu}
\author{Louis DeBiasio}
\address{Department of Mathematics, Miami University, Oxford, OH 45056, USA.}
\email{debiasld@muohio.edu}
\author[B.~Nagle]{Brendan Nagle}
\thanks{The third author was partially supported by NSF grant DMS~1001781.}
\address{Department of Mathematics and Statistics,
University of South Florida, 4202 E.~Fowler Ave, PHY~144, Tampa, FL 33620--5700, USA.}
\email{bnagle@usf.edu}
\begin{document}

\begin{abstract}
Let $K_4^3- 2e$ denote the hypergraph consisting of two triples on four points.  
For an integer $n$, let $t(n, K_4^3-2e)$ denote the smallest integer $d$ so that 
every 3-uniform hypergraph $G$ of order $n$ with minimum pair-degree $\delta_2 (G) \geq d$ contains $\lfloor n/4\rfloor $
vertex-disjoint copies of $K_4^3-2e$.  
K\"uhn and Osthus~\cite{KO} proved that 
$t(n, K_4^3-2e)  = \tfrac{n}{4}(1 + o(1))$ holds for large integers $n$.  
Here, we prove the exact counterpart, that for all sufficiently large integers $n$ divisible by 4, 
$$
t(n, K_4^3-2e) = 
\left\{ 
\begin{array}{cc}
\tfrac{n}{4} & \text{when $\tfrac{n}{4}$ is odd,} \\
\tfrac{n}{4} + 1 & \text{when $\tfrac{n}{4}$ is even.}
\end{array}
\right.  
$$
A main ingredient in our proof is the recent 
`absorption technique' 
of R\"odl, Ruci\'nski and Szemer\'edi.  
\end{abstract}

\maketitle

\section{Introduction}
For a fixed $k$-graph $H_0$ of order $m$, 
we say that 
a given 
$k$-graph 
$G$ of order $n$ 
is {\it $H_0$-tileable}
if $G$ contains, as subhypergraphs, $\lfloor n/m \rfloor$ vertex-disjoint copies of $H_0$.  
Now, suppose $G$ has vertex set $V$, 
and for an integer $1\leq \ell \leq k$, let $U \in \tbinom{V}{\ell}$ be given.  As is customary, let 
$$
N(U) = N_G(U) = \left\{W \in \binom{V}{k- \ell}: U \cup W \in E(G)\right\}\, \quad \text{and} \quad 
\del_{\ell}(G) = \min \left\{ |N(U)|: U \in \binom{V}{\ell}\right\}
$$
denote, respectively, the neighborhood of $U$ in $G$, and the {\it $\ell$-degree} of $G$.  
Define $t_{\ell}^k(n, H_0)$ to be the smallest integer $d$ so that every $k$-graph $G$ of order $n$
for which $\del_{\ell}(G) \geq d$
holds is $H_0$-tileable.

In the case of graphs ($k=2$), $t_1^2(n, H_0)$ is known, up to an additive constant, 
for every fixed graph $H_0$ (see \cite{KO2}).  Furthermore, there
are some graphs $H_0$ for which $t_1^2(n, H_0)$ is known exactly.  
The most celebrated such result is the Hajnal-Szemer\'edi theorem~\cite{HSz}, which says that for the $r$-clique $H_0 = K_r$ and for $n$ divisible by $r$, 
$$
t_1^2(n, K_r) = \left(1 - \frac{1}{r}\right)n.  
$$
A recent result of Wang~\cite{W} shows that for all integers $n$ divisible by 4, 
$t_1^2(n, C_{4}) = 
\tfrac{n}{2}$.
This result is a special case of the well-known El-Zahar conjecture, and had been independently conjectured by Erd\H{o}s and Faudree.  


In the case of hypergraphs ($k\geq 3$), 
much less is known about tiling problems.  
For only 
the $k$-edge $H_0 = K_k^k$ (the tiling of which is a perfect matching) 
is 
$t_{k-1}^k(n, H_0)$ known for all $k \geq 3$.  
This significant result is due to 
R\"odl, Ruci\'nski and Szemer\'edi~\cite{RRS}, and asserts that for all sufficiently large integers $n$ divisible by $k$, 
$$
t_{k-1}^k(n, K_k^k)  = 
\frac{n}{2} - k + \eps_{k,n}, \quad \text{where} \quad 
\eps_{k,n} \in 
\left\{ \frac{3}{2}, \, 2,\,  \frac{5}{2}, \,  3\right\}
$$
is determined by explicit divisibility conditions on $n$ and $k$.

We are interested in tilings when $k=3$ and $\ell=2$, where some 
interesting 
results have recently developed.  
(In what follows, we abbreviate $t_2^3(n,H_0)$ to $t(n,H_0)$.)  
As usual, 
let $K_4^3$ denote the complete 3-graph on 4 vertices.  Let $K_4^3 - e$ denote its subhypergraph consisting of 3 edges, and let $K_4^3 - 2e$ denote its subhypergraph consisting of 2 edges.  
K\"uhn and Osthus~\cite{KO} proved that 
$t(n, K_4^3 - 2e) = (1 +o(1)) n/4$.  Recently, 
Lo and Markstr\"om~\cite{LM1, LM2}
have shown that 
$t(n, K_4^3 -e) = (1 + o(1)) n/2$
and that 
$t(n, K_4^3) = (1 + o(1)) 3n/4$.
Keevash and Mycroft~\cite{KM} showed the exact counterpart that, 
for sufficiently large integers $n$ divisible by 4, 
$t(n, K_4^3) = (3n/4) - \eps_n$, 
where $\eps_n = 2$ if $8|n$ and $\eps_n = 1$ otherwise.  
We shall prove the following exact result for $K_4^3 - 2e$.  

\begin{theorem}\label{main}
For all sufficiently large integers $n$ divisible by 4,  
$$
t(n, K_4^3-2e) = 
\left\{
\begin{array}{cc}
\frac{n}{4} & \text{when $\tfrac{n}{4}$ is odd,}  \\
\frac{n}{4} + 1 & \text{when $\tfrac{n}{4}$ is even.}  
\end{array}
\right.  
$$
\end{theorem}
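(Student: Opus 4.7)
The proof will follow the absorption framework of R\"odl--Ruci\'nski--Szemer\'edi~\cite{RRS}, adapted to yield an exact threshold by splitting into a \emph{non-extremal case} (where absorption applies directly) and an \emph{extremal case} (requiring a direct structural argument).

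For the \emph{lower bounds}, I would exhibit, for each parity of $n/4$, a hypergraph $G$ with $\delta_2(G)$ one below the claimed threshold and no $K_4^3-2e$-tiling. The natural candidates are divisibility barriers: partition $V(G) = A \cup B$ and take edges satisfying a parity condition so that every copy of $K_4^3-2e$ has a restricted number of vertices in $A$; choosing $|A|$ outside the achievable set of tile-sums then obstructs tiling. The two constructions (one per parity of $n/4$) should differ in the choice of $|A| \pmod 4$, and the minimum pair-degree in each should match the corresponding threshold minus one.

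For the \emph{non-extremal case}, suppose $G$ is $\gamma$-far from every extremal configuration. I would first prove an absorbing lemma: there is a small set $M \subseteq V(G)$ such that for every $R \subseteq V \setminus M$ with $|R| \le \gamma n$ and $|R| \equiv 0 \pmod 4$, the induced hypergraph $G[M \cup R]$ has a $K_4^3-2e$-tiling. The strategy is to show that every $4$-set $S \subseteq V$ admits many bounded-size \emph{absorbers}---structures $A_S$ for which both $A_S$ and $A_S \cup S$ are tileable---and then to choose $M$ randomly as a union of absorbers, the non-extremality of $G$ providing the needed local flexibility. Setting $M$ aside and applying the K\"uhn--Osthus theorem~\cite{KO} to $G - M$ then yields an almost-perfect $K_4^3-2e$-tiling, whose small leftover is absorbed by $M$ to complete the tiling.

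For the \emph{extremal case}, absorption is too weak because extremal-like configurations lack the local flexibility required to build absorbers for every $4$-set. Instead I would argue directly: exploit the exact pair-degree hypothesis to recover the partition structure of the extremal example inside $G$, then perform careful vertex-exchanges between the parts to construct a tiling, splitting by the parity of $n/4$. \emph{The extremal case is the main obstacle}: it is where the parity-dependence in the final bound arises, and where the exact (rather than asymptotic) degree hypothesis must be exploited most delicately. Pinpointing the extremal constructions precisely enough that the non-extremal/extremal dichotomy is valid, and then verifying that every near-extremal configuration at the exact threshold admits a tiling, will be the heart of the proof.
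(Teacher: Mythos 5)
Your high-level architecture (matching lower-bound constructions, an extremal/non-extremal dichotomy, absorption in the non-extremal case) coincides with the paper's, but two of your concrete steps would fail. First, the lower bound: the extremal examples here are not divisibility barriers. A construction keyed to a parity condition on edges across a bipartition has minimum pair-degree far above $n/4$, and no such construction obstructs $D$-tilings at the claimed threshold. The correct examples are space barriers: take $|A|=\tfrac{n}{4}-1$ and let $G_0$ consist of all triples meeting $A$, so $\del_2(G_0)=\tfrac{n}{4}-1$, the complement $B$ spans no copy of $D$, and $A$ is too small to supply a vertex to each of $n/4$ tiles. The dependence on the parity of $\tfrac{n}{4}$ enters not through $|A|\bmod 4$ but through Steiner triple systems: when $\tfrac{n}{4}$ is even, $|B|=\tfrac{3n}{4}+1\equiv 1,3 \pmod 6$, so one may add an STS on $B$ (which is $D$-free, as no two of its triples share a pair) and raise the codegree to $\tfrac{n}{4}$ without creating a tiling; when $\tfrac{n}{4}$ is odd no such STS exists, and that nonexistence is precisely what drives Fact~\ref{noSTS} and hence the sharper threshold. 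Your proposed constructions would not produce either bound.

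Second, in the non-extremal case you place the non-extremality hypothesis in the wrong step. The absorbing lemma requires no non-extremality: the paper's Lemma~\ref{absorbing-lemma} holds under $\del_2(G)\geq \del n$ for any fixed $\del>0$, the absorbers for a $4$-set $U=\{u_1,\dots,u_4\}$ being manufactured from $N(u_1,u_2)$, $N(u_3,u_4)$ and supersaturation. Where non-extremality is indispensable is the almost-perfect tiling: after deleting the absorber $M$ the remaining graph has $\del_2\geq (\tfrac14-\alp)n$, strictly below the K\"uhn--Osthus threshold $(\tfrac14+o(1))|V(G-M)|$, so their theorem does not apply; worse, at that codegree the space barrier above (with $|A|=(\tfrac14-\alp)n$) admits no tiling covering all but $O(1)$ vertices. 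One must therefore prove a separate almost-perfect tiling lemma assuming both $\del_2\geq(\tfrac14-\g)m$ and non-extremality (the paper's Lemma~\ref{almost-perfect}, proved by a maximum-tiling swapping argument with ``$W$-big'' vertices). Your extremal-case sketch is too thin to evaluate, though you correctly locate the parity dependence there; the paper handles it by taking a maximal $D$-free set $Z$, classifying the remaining vertices by link density into $Z$, removing a few greedy partial tilings, and finishing with Pikhurko's perfect-matching theorem for $4$-partite $4$-graphs (Theorem~\ref{4graph}).
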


\noindent The proof of Theorem~\ref{main}
spans 
Sections~\ref{section:extreme}
and~\ref{section:non-extreme}.  
We mention that an essential ingredient in our proof is the `absorption technique' (see 
Section~\ref{section:non-extreme}) 
of R\"odl, Ruci\'nski and Szemer\'edi.

In the remainder of this 
paper, we shall make the abbreviation $D = K_4^3 - 2e$.  
(In the papers~\cite{KO, RRS}, $D = K_4^3 - 2e$ was abbreviated by $\cC$ and $C_4^{3,1}$, respectively, since for those authors, $K_4^3-2e$ was viewed as a type of cycle.)  
In the remainder of this 
introduction, we discuss the main concept used in the proof of Theorem~\ref{main}, that of an `$\eps$-extremal' 3-graph (for $D= K_4^3-2e$).

\subsection{Theorem~\ref{main} and~$\eps$-extremal 3-graphs}

To motivate the concept of an $\eps$-extremal 3-graph (stated in the upcoming Definition~\ref{extremaldef}), 
we first observe the following constructions for the lower bounds of Theorem~\ref{main}.

Let $A$ be a set of $\tfrac{n}{4} - 1$ vertices, and let $B$ be a set of $\tfrac{3n}{4} + 1$ additional vertices.  
Define $G_0 = \tbinom{A \cup B}{3} \setminus \tbinom{B}{3}$, and note that $\del_2(G_0) = \tfrac{n}{4} - 1$.  
When $\tfrac{n}{4}$ is even, add any Steiner triple system\footnote{A {\it Steiner triple system} (STS) is a 
3-graph $H$ where $\del_2(H) = \Del_2(H) = 1$. 
It is well-known that an STS of order $m$ exists if, and only if, $m \equiv 1, 3$ (mod 6).}  
on vertex set $B$ to $G_0$, and call this hypergraph $G_1$, where we note that $\del_2(G_1) = \tfrac{n}{4}$.  
Since $G_i[B]$, $i = 0, 1$, is $D$-free,  
every copy of $D$ in $G_i$
contains at least one vertex of $A$, and so 
$G_i$ is not 
$D$-tileable.  

\begin{definition}[$\e$-extremal]  
\label{extremaldef}
\rm 
Let $\e> 0$ be given, and 
suppose $G$ is a 3-graph 
of order $n$.  
We say $G$ is {\it $\e$-extremal}
if there exists $S \subset V(G)$ of size $|S| \geq (1 - \ep) \tfrac{3n}{4}$ for which $G[S]$ is $D$-free.  
\end{definition}

While the lower bound
constructions for Theorem~\ref{main} 
motivate the concept of Definition~\ref{extremaldef}, the following fact indicates why we choose the terminology `extremal'.

\begin{fact}
\label{noSTS}
Let $G$ be a 3-graph 
on $n$ vertices, where $n$ is divisible by 4, satisfying 
\begin{equation}
\label{eqn:codeg}
\del_2(G) \geq 
\left\{
\begin{array}{cc}
\frac{n}{4} & \text{when $\tfrac{n}{4}$ is odd,} \\
\frac{n}{4} + 1 & \text{when $\tfrac{n}{4}$ is even.} 
\end{array}
\right.  
\end{equation}
Then any $S \subset V(G)$ for which $G[S]$ is $D$-free satisfies $|S| \leq \tfrac{3}{4}n$.  
\end{fact}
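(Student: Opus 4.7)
The plan is to observe that $G[S]$ is $D$-free exactly when every pair of vertices in $S$ has at most one common neighbor in $S$ (since a copy of $D$ is just two triples sharing a pair), and then to compare this codegree upper bound against the hypothesis $\delta_2(G)\geq n/4$ or $n/4+1$.

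First I would assume for contradiction that $|S|\geq 3n/4+1$, and set $T=V(G)\setminus S$, so $|T|\leq n/4-1$. For every pair $\{u,v\}\in\binom{S}{2}$, combining $|N_G(\{u,v\})\cap S|\leq 1$ with $|N_G(\{u,v\})|\geq \delta_2(G)$ gives
\[
|N_G(\{u,v\})\cap T|\ \geq\ \delta_2(G)-1.
\]
I would next dispose of two of the three sub-cases by direct counting: when $n/4$ is even this reads $|N_G(\{u,v\})\cap T|\geq n/4>|T|$, and when $n/4$ is odd with $|S|\geq 3n/4+2$ (so $|T|\leq n/4-2$) it reads $|N_G(\{u,v\})\cap T|\geq n/4-1>|T|$. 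Both possibilities contradict $N_G(\{u,v\})\cap T\subseteq T$.

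The remaining case, which I expect to be the main obstacle, is $n/4$ odd with $|S|=3n/4+1$ exactly; here the counting inequalities admit equality rather than a contradiction. The degree bound now forces $N_G(\{u,v\})\cap T=T$ and $|N_G(\{u,v\})\cap S|=1$ for every pair $\{u,v\}\in\binom{S}{2}$. The second equality says every pair in $S$ has \emph{exactly} one common neighbor in $S$, i.e., $G[S]$ is a Steiner triple system of order $3n/4+1$. Writing $n=4m$ with $m$ odd gives $3n/4+1=3m+1\equiv 4\pmod{6}$, and by the existence criterion recalled in the footnote no Steiner triple system of such order exists. This arithmetic obstruction, rather than a pure counting one, is precisely the reason the bound in~\eqref{eqn:codeg} can be sharpened to $n/4$ (rather than $n/4+1$) when $n/4$ is odd, and why the Fact is sharp against the lower-bound constructions $G_0,G_1$ described just before Definition~\ref{extremaldef}.
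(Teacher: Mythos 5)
Your proof is correct and follows essentially the same route as the paper: the same counting of $|N(u,v)\cap S|\leq 1$ versus the codegree hypothesis disposes of the even case (and of $|S|\geq \tfrac{3n}{4}+2$), and the borderline case $\tfrac{n}{4}$ odd with $|S|=\tfrac{3n}{4}+1$ is handled by the same Steiner-triple-system obstruction, namely $\tfrac{3n}{4}+1\equiv 4\pmod 6$. The only cosmetic difference is that you derive that $G[S]$ would have to \emph{be} an STS and then invoke non-existence, whereas the paper starts from non-existence to produce a pair with empty codegree in $S$; these are contrapositives of the same step.
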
  

\begin{proof} Since  
$G[S]$ is $D$-free, when $\tfrac{n}{4}$ is even, 
we have $\tfrac{n}{4} + 1 \leq \del_2(G) \leq n - (|S| - 1)$, and the result follows.  
When 
$\tfrac{n}{4}$ is odd, 
suppose 
some $S_0 \subset V(G)$ exists of size $\tfrac{3n}{4} + 1$ for which $G[S_0]$ is $D$-free.  
Since $G[S_0]$ is not an STS (since $\tfrac{3n}{4} + 1 \not\equiv 1, 3$ (mod 6)), some pair $s, s' \in S_0$ satisfies 
$N(s, s') \cap S_0 = \emptyset$, in which case $\frac{n}{4} \leq |N(s, s')| \leq n - |S_0|$, and the result follows.  
\end{proof}

Now, the upper bounds in Theorem~\ref{main} follow immediately from the following two statements.

\begin{theorem}[Theorem~\ref{main} -- extremal case]  
\label{extreme}
There exists $\e_0 > 0$ so that, 
for all sufficiently large integers $n$ divisible by 4, the following holds.  
Whenever $G$ is a 3-graph 
of order $n$ 
satisfying~(\ref{eqn:codeg}) and which 
is $\e_0$-extremal, then $G$ is $D$-tileable.  
\end{theorem}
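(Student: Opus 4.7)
\emph{Refining the extremal structure.} I exploit the near-extremal structure to construct the $D$-tiling directly. Begin with the set $S$ promised by Definition~\ref{extremaldef} and enlarge it greedily as long as $G[S]$ remains $D$-free. By Fact~\ref{noSTS}, $|S|\le\tfrac{3n}{4}$, so $T:=V(G)\setminus S$ satisfies $|T|=\tfrac{n}{4}+\alpha$ for some $0\le\alpha\le\tfrac{3}{4}\e_0 n$ (with $\alpha\ge 1$ when $\tfrac{n}{4}$ is even, by the argument in the proof of Fact~\ref{noSTS}). The essential consequence of maximality is that for every $t\in T$, the copy of $D$ guaranteed in $G[S\cup\{t\}]$ must involve $t$, which forces either (A) some $s\in S$ with $|N(t,s)\cap S|\ge 2$, or (B) some edge $\{s_1,s_2,s_3\}\in G[S]$ containing a pair $\{s_i,s_j\}$ with $t\in N(s_i,s_j)$. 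In either case $\{t\}$ extends to a copy of $D$ using three $S$-vertices.

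\emph{Balancing the parts.} When $\alpha>0$ I excise $\alpha$ vertex-disjoint ``cross'' copies of $D$, each with two $T$-vertices and two $S$-vertices, to reduce to $|S'|=3|T'|$. Cross copies of this shape are abundant: for every pair $\{s,s'\}\subset S$, $|N(s,s')\cap S|\le 1$ (by $D$-freeness of $G[S]$) and the codegree hypothesis together give $|N(s,s')\cap T|\ge\tfrac{n}{4}-1$, so any two elements of $N(s,s')\cap T$ produce a copy of $D$ on $\{s,s',t,t'\}$ with base pair $\{s,s'\}$. A greedy extraction of $\alpha=O(\e_0 n)$ disjoint such tiles is immediate. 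After their removal, the residual sets $T'\subset T$ and $S'\subset S$ satisfy $|S'|=3|T'|$, $G[S']$ is still $D$-free, and --- since every pair in $\binom{S'}{2}$ retains codegree at least $\tfrac{n}{4}-1-2\alpha$ into $T'$ --- a double count of $\sum_{t\in T'}e(L_t[S'])$ shows that all but $O(\alpha+1)$ of the vertices $t\in T'$ still admit a type-(A) witness into $S'$.

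\emph{Completing the tiling.} It remains to find $|T'|$ vertex-disjoint copies of $D$, each with one $T'$-vertex and three $S'$-vertices. Call $t\in T'$ \emph{atypical} if $L_t[S']$ has no vertex of degree $\ge 2$; by the double count above, at most $O(\alpha+1)$ vertices of $T'$ are atypical. By the maximality of $S$, each atypical $t$ must possess a type-(B) witness --- an edge of $G[S']$ containing a pair whose codegree into $T$ includes $t$. In Phase~1 I find a bipartite matching between the atypical vertices and the edges of $G[S']$ to assign each atypical $t$ a distinct witness edge, producing the corresponding $D$-copies. In Phase~2, after removing the $O(\alpha+1)$ committed $S'$-vertices, every typical $t\in T'$ still has its link graph $L_t[S'']$ nearly complete, and a straightforward greedy (or defect-Hall) argument assigns to each typical $t$ a root $s\in S''$ and two extension vertices in $N(t,s)\cap S''$, all vertex-disjoint across $t\in T'$.

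\textbf{Main obstacle.} The principal challenge is Phase~1: verifying Hall's condition between the atypical $t$'s and the edges of $G[S']$, since a priori I only know that each atypical $t$ possesses \emph{some} witness edge, not that witness edges are plentiful. The natural workaround is to broaden the pool of admissible tile shapes for atypical $t$ --- in particular allowing cross-type tiles that pair two atypical vertices together (and compensating in the balancing step) --- so that Hall's condition becomes verifiable. Once Phase~1 is in place, Phase~2 is routine, as the $O(\alpha+1)=o(n)$ vertices of $S'$ consumed in Phase~1 leave the near-complete link graphs $L_t[S']$ essentially intact for all typical $t$. The arithmetic cases $\tfrac{n}{4}$ odd vs.\ even enter only through the enforced lower bound $\alpha\ge 1$ in the even case, which is absorbed into the balancing step without further change.
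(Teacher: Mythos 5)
Your overall plan is recognizably the same shape as the paper's: take a maximal $D$-free set ($S$ versus the paper's $Z$), note the complement has size about $n/4$, spend a few ``cross'' tiles with two vertices on each side to fix the ratio to $3:1$, and finish with tiles having one vertex outside and three inside. But two of your three phases have genuine gaps. First, Phase~1 is unresolved, as you yourself concede: maximality of $S$ only gives each atypical $t$ \emph{some} witness (and in fact only a witness in $S$, not in $S'$ --- the witness could use vertices deleted during balancing, and an atypical $t$ might have only a type-(A) witness through those deleted vertices and no type-(B) witness at all), so Hall's condition between atypical vertices and witness edges is not established. The paper sidesteps this entirely: it never matches exceptional vertices to edges of the $D$-free set. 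Instead it defines the exceptional set $Y$ as those vertices whose link on $Z$ is not $(1-\alpha)$-complete, proves by a global edge count that $|Y|=O(\alpha^2 n)$, and then covers $Y$ by tiles of the form (one $Y$-vertex, three $Z$-vertices) and (one $Y$, one $X$, two $Z$), using codegree counting (Claim~2.2) rather than witness edges.

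Second, Phase~2 is not ``routine.'' Being typical in your sense (the link $L_t[S']$ contains a vertex of degree $\ge 2$) is far weaker than having a nearly complete link, so the sentence ``every typical $t$ still has its link graph nearly complete'' does not follow from your definitions; and even after strengthening the definition so that typical vertices do have $(1-o(1))$-complete links, a greedy (or bipartite defect-Hall) argument cannot produce a \emph{perfect} assignment: you must use up essentially all of $S''$, each $t$ needs a cherry (path of length~2) in its link among the three $S''$-vertices it receives, and the last few vertices of $S''$ left to the last $t$ need not form a cherry in $L_t$. This is a perfect-matching problem in an auxiliary $4$-partite $4$-uniform hypergraph, and the paper resolves it by partitioning the remainder of $Z$ into three equal parts and invoking Pikhurko's theorem (Theorem~\ref{4graph}); some such perfect-matching input (or a delicate extremal analysis) is genuinely needed here and is missing from your argument.
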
  

\noindent We prove Theorem~\ref{extreme} in Section~\ref{section:extreme}.  

\begin{theorem}[Theorem~\ref{main} -- non-extremal case]  
\label{non-extreme}
For every $\ep>0$ 
and for all sufficiently large integers $n$ divisible by 4, the following holds.  
Whenever $G$ is a 3-graph of order $n$ satisfying~(\ref{eqn:codeg})  
(see Remark~\ref{remark}),   
which is not $\e$-extremal, then $G$ is $D$-tileable.  
\end{theorem}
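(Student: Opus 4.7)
The plan is to deploy the absorption technique of R\"odl, Ruci\'nski and Szemer\'edi. The argument has three stages. First, we construct an \emph{absorbing set} $A\subseteq V(G)$ with $|A|\le \ep' n$ (for a small $\ep'=\ep'(\ep)$) such that $G[A]$ has a perfect $D$-tiling and, moreover, for every $L\subseteq V(G)\setminus A$ with $4\mid |L|$ and $|L|\le (\ep')^2 n$, the induced hypergraph $G[A\cup L]$ has a perfect $D$-tiling. Second, apply the near-tiling result of K\"uhn--Osthus to $G\setminus A$: since $\del_2(G\setminus A)\ge n/4 - |A| \ge (1-o(1))n/4$ still meets the asymptotic threshold, $G\setminus A$ admits a $D$-matching covering all but at most $(\ep')^2 n$ vertices. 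Third, absorb the uncovered set $L$ into $A$ via the absorbing property, producing a perfect $D$-tiling of $G$.

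The crux is the construction of $A$. For a $4$-set $T$, call a disjoint vertex set $X$ a \emph{$T$-absorber} if both $G[X]$ and $G[X\cup T]$ admit perfect $D$-tilings. A standard probabilistic construction (choose a random family of absorbers of a carefully calibrated density, then thin out overlaps; cf.~\cite{RRS}) shows that it suffices to establish the following absorber count: for some constants $t\in\mathbb{N}$ and $c>0$ depending only on $\ep$, every $4$-set $T\subseteq V(G)$ admits at least $c\,n^{4t}$ $T$-absorbers of size $4t$. The construction of such absorbers proceeds by extending $T$ greedily to copies of $D$ using the codegree hypothesis (which supplies $\Omega(n)$ common neighbors to each pair), and then pairing these with further disjoint copies of $D$ that restore the tileability of $X$ alone. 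The non-extremality assumption is used precisely to guarantee that these greedy extensions do not become trapped in a $D$-free sub-$3$-graph, since by Definition~\ref{extremaldef} no set of size $(1-\ep)\tfrac{3n}{4}$ is $D$-free.

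The main obstacle is showing the uniform $\Omega(n^{4t})$ lower bound on $T$-absorbers for \emph{every} $4$-set $T$ -- including the vast majority that do not themselves span a copy of $D$. A natural approach is a case analysis based on the neighborhood structure of $T$: one inspects the common-neighbor sets $N(t_i,t_j)$ for pairs $\{t_i,t_j\}\subseteq T$ and, in each case, either performs a short explicit exchange (when $T$ almost spans a $D$) or routes through a longer chain of $D$-copies supplied by non-extremality. The delicate point is to combine these local exchanges so that both the $X$-tiling and the $(X\cup T)$-tiling exist \emph{simultaneously}, while maintaining the $\Omega(n^{4t})$ count throughout. Once the absorbing lemma is in hand, the almost-tiling step follows from K\"uhn--Osthus with essentially unchanged codegree (the additive loss from removing $A$ being absorbed into the $o(n)$ slack), and the final combination step is immediate from the defining property of $A$.
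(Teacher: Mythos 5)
There is a genuine gap in your Stage~2, and it stems from misplacing where non-extremality must be used. After deleting the absorbing set $A$ you only have $\del_2(G\setminus A)\ge (1/4-\ep')n$, which is \emph{below} the K\"uhn--Osthus threshold: their result~\cite{KO} guarantees a near-perfect $D$-tiling only under codegree $(1/4+o(1))n$, approached from above. Worse, codegree $(1/4-\g)n$ alone does not give an almost-perfect tiling leaving only $(\ep')^2n$ vertices uncovered: in the 3-graph $\tbinom{A_0\cup B_0}{3}\setminus\tbinom{B_0}{3}$ with $|A_0|=(1/4-\g)n$, every copy of $D$ meets $A_0$, so any $D$-tiling leaves at least $4\g n$ vertices uncovered, and $\g\approx\ep'\gg(\ep')^2$. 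So the almost-tiling step cannot be outsourced; it is exactly here that non-extremality of $G\setminus A$ (which is inherited from $G$, with slightly worse constants) must be invoked. The paper proves this as Lemma~\ref{almost-perfect}: take a maximum $D$-tiling $\cM$ with leftover set $W$, show via a counting argument that almost $m/4$ covered vertices are ``$W$-big'' (have $\ge 10|W|$ neighbors inside $\tbinom{W}{2}$), note each element of $\cM$ contains at most one such vertex, and then use non-extremality to find a copy of $D$ inside the $3|B|\ge(1-8\g)3m/4$ ``small'' partners of the big vertices; swapping this copy in and releasing the big vertices into $W$ enlarges $\cM$, a contradiction. Your proposal contains no substitute for this argument.

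By contrast, your placement of non-extremality inside the absorber construction is unnecessary, and that construction is also the part you explicitly leave open (``the main obstacle,'' resolved only by an unspecified case analysis). The paper's Proposition~\ref{prop:abs} needs only $\del_2(G)\ge\del n$ and handles \emph{every} $4$-set $U$ uniformly, with no case distinction: split $U$ into two pairs, set $V_1=N(u_1,u_2)$, $V_2=N(u_3,u_4)$, and let $V_3$ collect common neighborhoods of $V_1\times V_2$; this yields a tripartite $3$-graph with $\Omega(n^3)$ edges, so Erd\H{o}s supersaturation (Theorem~\ref{thm:erd}) gives $\Omega(n^9)$ copies of $K^3_{3,3,3}$, and from each one an explicit $8$-set absorbing $U$ (two $2$-edge stars give the tiling of $S$, and the pairs $\{u_1,u_2\}$, $\{u_3,u_4\}$ with their neighbors give the tiling of $S\cup U$). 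So your framework is right, but the two load-bearing lemmas --- the absorber count and, especially, the almost-perfect tiling under sub-threshold codegree --- are respectively left unproved and justified by an inapplicable citation.
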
  

\noindent We prove Theorem~\ref{non-extreme} in Section~\ref{section:non-extreme}.  

\begin{remark}
\label{remark}  
\rm 
We mention that Theorem~\ref{non-extreme} can be proved, for the same money, under a slightly weaker hypothesis than~(\ref{eqn:codeg}).  
In particular, 
Theorem~\ref{non-extreme} remains valid if one only assumes 
that 
$\del_2(G) \geq (n/4) (1 - \gamma)$, for a constant $\gamma > 0$ sufficiently smaller than $\eps$.  
\end{remark}

\section{Proof of Theorem~\ref{extreme}}
\label{section:extreme}

We shall use the following theorem of Pikhurko \cite{P}, stated here in a less general form.

\begin{theorem}[\cite{P}, Theorem 3]\label{4graph}
Let $H$ be a $4$-partite $4$-graph with 4-partition $V(H) = V_1\cup V_2\cup V_3\cup V_4$, where $|V_1| = \dots = |V_4| = m$.  
Let $\delta(V_1)=\min\{|N(v_1)|: v_1\in V_1\}$ and 
$$
\delta(V_2, V_3, V_4)=\min\{|N(v_2, v_3, v_4)|: v_2\in V_2, \, v_3\in V_3, \, v_4\in V_4\}.
$$  
For $\g > 0$ and a sufficiently large integer $m$, if 
$$
m\delta(V_1)+m^3 \delta(V_2, V_3, V_4)\geq (1+\gamma)m^4,
$$
then $H$ contains a perfect matching.
\end{theorem}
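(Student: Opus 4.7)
The plan is to deploy the absorption method adapted to the 4-partite 4-uniform setting. The argument has three phases: building a small absorbing matching $M_{\text{abs}}$, covering most remaining vertices by a large greedy matching $M^*$, and finishing by using $M_{\text{abs}}$ to absorb the balanced leftover.

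The heart of the proof is the \emph{absorption lemma}: I want to show that each balanced 4-tuple $x = (x_1, x_2, x_3, x_4) \in V_1 \times V_2 \times V_3 \times V_4$ admits at least $\eta m^4$ \emph{absorbers}, where $\eta = \eta(\gamma) > 0$. Here I call $y = (y_1, y_2, y_3, y_4)$ (with $y_i \in V_i$, disjoint from $x$) an absorber for $x$ if the 4-sets $\{y_1, y_2, y_3, y_4\}$, $\{y_1, x_2, x_3, x_4\}$, and $\{x_1, y_2, y_3, y_4\}$ are all edges of $H$; the intended action is to replace $\{y_1, y_2, y_3, y_4\}$ in a pre-built matching by the latter two edges, incorporating $x$ into the matching. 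To count absorbers, I would first pick $y_1 \in N_H(x_2, x_3, x_4)$, yielding at least $\delta(V_2, V_3, V_4)$ choices; then pick $(y_2, y_3, y_4) \in N_H(y_1) \cap N_H(x_1) \subseteq V_2 \times V_3 \times V_4$, yielding at least $2\delta(V_1) - m^3$ triples by inclusion--exclusion; multiplying gives at least $\delta(V_2, V_3, V_4) (2\delta(V_1) - m^3)$ absorbers. When $\delta(V_1) \geq m^3/2 + \gamma m^3/4$, the hypothesis produces $\Omega(\gamma^2 m^4)$ absorbers. When $\delta(V_1) < m^3/2 + \gamma m^3/4$, the hypothesis forces $\delta(V_2, V_3, V_4) > (1/2 + 3\gamma/4)m$, and I would use a dual absorber structure (e.g.\ swap $(y_1, y_2) \leftrightarrow (x_1, x_2)$) whose count I obtain by averaging: the strong $\delta(V_2, V_3, V_4)$ bound implies that most pairs $(x_1, v_2)$ have large codegree in $V_3 \times V_4$, enabling an analogous absorber count of size $\Omega(\gamma^2 m^4)$.

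With the absorption lemma in hand, a standard random sampling argument (analogous to the absorbing-set construction of R\"odl, Ruci\'nski, and Szemer\'edi) produces an absorbing matching $M_{\text{abs}}$ of size $o(m)$ such that any balanced leftover set $L \subseteq V(H) \setminus V(M_{\text{abs}})$ with $|L \cap V_i| \leq \eta' m$ can be absorbed. A greedy procedure on $H \setminus V(M_{\text{abs}})$ then yields a matching $M^*$ leaving at most $\eta' m$ vertices uncovered in each part: the degree hypothesis survives the removal of $o(m)$ vertices, so at each step an edge on the uncovered vertices exists as long as each part still has more than $\eta' m$ vertices remaining. The leftover $L = V(H) \setminus V(M_{\text{abs}} \cup M^*)$ is automatically balanced and small, so the absorbing property of $M_{\text{abs}}$ extends to cover it, completing a perfect matching.

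The main obstacle I anticipate is the absorption lemma in the regime where $\delta(V_1)$ is small: the direct inclusion--exclusion argument fails, and the triple-codegree hypothesis on $V_2 \times V_3 \times V_4$ does not translate directly to pair-codegree bounds involving $V_1$. Overcoming this likely requires either a careful averaging argument to extract usable pair-codegree information, or an alternative absorber structure (e.g.\ two matching edges replaced by three) providing the needed flexibility at the cost of a more intricate count.
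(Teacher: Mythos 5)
First, a point of comparison: the paper does not prove Theorem~\ref{4graph} at all --- it is imported as a black box from Pikhurko~\cite{P} --- so there is no in-paper proof to measure your sketch against; I can only assess it on its own terms. Your absorber gadget is sound, and the obstacle you flag in the regime $\delta(V_1) < m^3/2 + \gamma m^3/4$ is actually a non-issue: the \emph{same} gadget can be counted in the opposite order. The hypothesis always forces $\delta(V_1)\ge \gamma m^3$ and $\delta(V_2,V_3,V_4)\ge \gamma m$, and in your second case $\delta(V_2,V_3,V_4) > (1/2+3\gamma/4)m$; so first choose $(y_2,y_3,y_4)\in N(x_1)$ (at least $\gamma m^3$ choices) and then $y_1\in N(y_2,y_3,y_4)\cap N(x_2,x_3,x_4)$ (at least $2\delta(V_2,V_3,V_4)-m\ge (3\gamma/2)m$ choices), giving $\Omega(\gamma^2 m^4)$ absorbers with no new structure needed. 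By contrast, the repair you actually propose is unsound: the bound on $\delta(V_2,V_3,V_4)$ gives no pair-codegree information for the \emph{fixed} vertex $x_1$; its link is only known to have size at least $\gamma m^3$ and could be confined to $V_2'\times V_3\times V_4$ with $|V_2'|=\gamma m$, so ``most pairs $(x_1,v_2)$ have large codegree'' fails.

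The genuine gap is the covering step. You justify the large matching $M^*$ by asserting that an edge among the uncovered vertices exists as long as each part retains more than $\eta' m$ uncovered vertices; this is false under the theorem's hypothesis. Fix $A\subseteq V_1$ with $|A|=m/2$ and $U_2\subseteq V_2$ with $|U_2|=\eta' m$, and let the edges be exactly the transversal quadruples with $v_1\in A$ or $v_2\notin U_2$. Then $\delta(V_2,V_3,V_4)=m/2$ and $\delta(V_1)=(1-\eta')m^3$, so $m\delta(V_1)+m^3\delta(V_2,V_3,V_4)=(3/2-\eta')m^4\ge(1+\gamma)m^4$, yet there is no edge inside $U_1\cup U_2\cup U_3\cup U_4$ for any choice of $U_1\subseteq V_1\setminus A$ and any $U_3,U_4$, whatever their (linear) sizes. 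So a greedy process can in principle stall with linearly many vertices uncovered in every part, and ``the degree hypothesis survives the removal of $o(m)$ vertices'' does not rescue it: neither $\delta(V_1)$ nor $\delta(V_2,V_3,V_4)$ alone is large enough to force an edge into prescribed linear-sized subsets of all four parts. What is missing is an almost-perfect-matching lemma proved from the mixed condition --- for instance, take a maximum matching in $H\setminus V(M_{\mathrm{abs}})$, assume at least $\epsilon m$ leftover vertices per part, and count augmenting configurations that delete one matching edge and insert two edges meeting the leftover, an argument that must genuinely combine both degree parameters. In an absorption-style proof this lemma is where the real content of Pikhurko's theorem lies, and until it is supplied the proposal is incomplete.
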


To prove Theorem~\ref{extreme}, it suffices to take $\e_0 = 10^{-18}$, and we shall take $n$ sufficiently large, whenever needed.  
We write $n = 4k$
and $\alp^3 = \e_0$.  
Let $G$ be a 3-graph of order $n$ satisfying~(\ref{eqn:codeg}) 
which is $\e_0$-extremal.  We prove that $G$ is $D$-tileable, and will construct a $D$-tiling in stages.  In particular, we will build
vertex-disjoint  partial $D$-tilings $\cQ$, $\cR$, $\cS$ and $\cT$ whose union is a $D$-tiling of $G$.  
To build these partial tilings, we need a few initial considerations.

To begin, let $Z \subset V(G)$ be a maximal set for which $G[Z]$ is $D$-free.  
Define 
\begin{equation}
\label{eqn:X} 
X = \left\{x \in V(G) \setminus Z: \left|N(x) \cap \binom{Z}{2}\right| \geq (1 - \alp) \binom{|Z|}{2} \right\}, 
\end{equation}  
and $Y = V(G) \setminus (X \cup Z)$.  
We estimate each of the quantities in $|X| + |Y| + |Z| = 4k = n$:  
\begin{equation}
\label{Yupper}  
k(1 - 4\alp^2) 
\leq 
|X|
\leq 
k (1 + 3\e_0), \quad 
0 \leq |Y| \leq 4\alpha^2 k, \quad 
3k(1 - \e_0) \leq 
|Z|
\leq 
3k,   
\end{equation}
i.e., $|Y|$ is small, $|X|$ is around $n/4$ and $|Z|$ is around $3n/4$.   
Indeed, the third estimate in~(\ref{Yupper}) follows from our hypothesis and Fact~\ref{noSTS}.  
To see the second estimate, 
for 
$W \subset X \cup Y$, 
write 
$G[Z, Z, W]$ for the 
collection of triples of $G$ consisting of two vertices from $Z$ and one vertex from $W$.  Then, 
$$
(k - 1) \binom{|Z|}{2} \leq \big|G[Z,Z,X\cup Y]\big| 
\leq 
(1 - \alp) \binom{|Z|}{2} |Y| + \binom{|Z|}{2} |X|, 
$$
so that 
$k-1+\alpha|Y|\leq |X|+|Y|$.  
The estimate on $|Z|$ implies that 
$|X| + |Y| \leq k + 3\e_0 k$, and so we have
the second estimate of~(\ref{Yupper}).    
Finally, our bounds on $|Y|$ and $|Z|$ render the first estimate in~(\ref{Yupper}).

Let us also check that~\eqref{Yupper} implies that 
\begin{equation}\label{zzx}
\forall z_1, z_2\in Z, \ |N(z_1, z_2)\cap X|\geq (1-\alpha)|X|.
\end{equation}
Indeed, 
since $|N(z_1, z_2) \cap Z| \leq 1$, we have 
$$
|N(z_1, z_2) \cap X| \geq k - 1 - |Y| 
\stackrel{\text{(\ref{Yupper})}}{\geq}  
(1 - 5 \alp^2)  k 
\stackrel{\text{(\ref{Yupper})}}{\geq}  
\frac{1 - 5 \alp^2}{1 + 3\e_0}  |X|  
\geq 
(1 - \alp) |X|.  
$$
We now introduce the first of our partial $D$-tilings, namely, $\cQ$.  \\

\noindent {\bf The partial $D$-tiling $\cQ$.}  
Let $\cQ$ be a largest 
$D$-tiling in $G$ for which each element $D_0 \in \cQ$ has three vertices in $Z$ and one vertex in $Y$.  Write $q = |\cQ|$, 
write $Y_{\cQ} \subset Y$ for the set of vertices of $Y$ covered by $\cQ$, 
and write $Z_{\cQ} \subset Z$ for the set of vertices of $Z$ covered by $\cQ$.  Clearly, $|Y_{\cQ}| = q$ and $|Z_{\cQ}| = 3q$.  
Write $\ell = k - |X|$, where we note 
from~(\ref{Yupper}) that   
\begin{equation}
\label{eqn:sizeofell}
-3 \e_0 k
\leq 
\ell = k - |X| \leq 4 \alpha^2 k.    
\end{equation}
For future reference, we make the following two claims.    

\begin{claim}
$q\geq \ell = k - |X|$.  
\end{claim}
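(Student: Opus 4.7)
My plan is to proceed by contradiction: assume $q \le \ell - 1$, and produce a copy of $D$ on $\{y, z_1, z_2, z_3\}$ with $y \in Y \setminus Y_\cQ$ and $z_1, z_2, z_3 \in Z \setminus Z_\cQ$, directly contradicting the maximality of $\cQ$. Such a $y$ exists because $|Y| \ge \ell > q$: indeed $|X| + |Y| + |Z| = 4k$, $|X| = k - \ell$, and $|Z| \le 3k$ by~(\ref{Yupper}).

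The central ingredient I would first establish is the codegree estimate that every pair $\{z, z'\} \in \binom{Z}{2}$ satisfies
\[
|N(z,z') \cap Y| \;\ge\; \delta_2(G) - |N(z,z') \cap Z| - |N(z,z') \cap X| \;\ge\; k - 1 - (k-\ell) \;=\; \ell - 1,
\]
where $|N(z,z') \cap Z| \le 1$ comes from $G[Z]$ being $D$-free (two common $Z$-neighbors would build the two triples of a $D$-copy inside $G[Z]$) and $|N(z,z') \cap X| \le |X| = k - \ell$. Subtracting $|Y_\cQ| = q$ and then double-counting,
\[
\sum_{y' \in Y \setminus Y_\cQ} |E(L_{y'}[Z \setminus Z_\cQ])| \;=\; \sum_{\{z,z'\} \in \binom{Z \setminus Z_\cQ}{2}} |N(z,z') \cap (Y \setminus Y_\cQ)| \;\ge\; (\ell - 1 - q)\tbinom{|Z \setminus Z_\cQ|}{2}.
\]

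When $q \le \ell - 2$, the right-hand side is at least $\binom{|Z \setminus Z_\cQ|}{2}$, while $|Y \setminus Y_\cQ| \le |Y| \le 4\alpha^2 k$ by~(\ref{Yupper}) and $|Z \setminus Z_\cQ|$ is of order $3k$. Thus some $y' \in Y \setminus Y_\cQ$ has $|E(L_{y'}[Z \setminus Z_\cQ])|$ vastly exceeding $|Z \setminus Z_\cQ|/2$, so $L_{y'}[Z \setminus Z_\cQ]$ cannot be a matching; some $z^* \in Z \setminus Z_\cQ$ then has two distinct link-neighbors $z', z'' \in Z \setminus Z_\cQ$, and the quadruple $\{y', z^*, z', z''\}$, whose triples $\{y', z^*, z'\}$ and $\{y', z^*, z''\}$ share the pair $\{y', z^*\}$, is the required $D$-copy.

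The tight case $q = \ell - 1$ is the main obstacle, since the slack in the double-count vanishes there. When $\tfrac{n}{4}$ is even, the hypothesis $\delta_2(G) \ge k+1$ strengthens the key bound to $|N(z,z') \cap Y| \ge \ell$, keeping the double-count strictly positive and allowing the previous argument to go through verbatim. When $\tfrac{n}{4}$ is odd, equality $|N(z,z') \cap Y| = \ell - 1$ forces the rigid configuration $|N(z,z')| = k$, $|N(z,z') \cap Z| = 1$, and $X \subseteq N(z,z')$ at every pair; I would show that uniform tightness across $\binom{Z \setminus Z_\cQ}{2}$ makes $G[Z]$ a Steiner triple system (forcing $|Z| \equiv 1, 3 \pmod{6}$), and then derive a contradiction from the parity of $|Z| = 3k + \ell - |Y|$ together with a codegree estimate at pairs $\{y, z\}$ that exploits $y \notin X$.
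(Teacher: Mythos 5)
Your argument for $q\le \ell-2$ is correct, and so is the even case of $q=\ell-1$: the double count, the observation that a maximal $\cQ$ forces each link $L_{y'}[Z\setminus Z_\cQ]$ to be a matching, and the comparison $\binom{|Z\setminus Z_\cQ|}{2}\gg |Y\setminus Y_\cQ|\cdot|Z\setminus Z_\cQ|/2$ all check out (one should also note $|Z\setminus Z_\cQ|=|Z|-3q\ge 3k(1-\e_0)-12\alp^2k$, so it really is of order $3k$). But the remaining case --- $\tfrac n4$ odd and $q=\ell-1$ --- is only a sketch, and the sketch does not work. The tightness you want to exploit is only \emph{average-case}: the failure of the matching argument gives $\sum_{y'}|E(L_{y'}[Z\setminus Z_\cQ])|\le |Y\setminus Y_\cQ|\,|Z\setminus Z_\cQ|/2$, which is of order $\alp^2k^2$, so up to a constant-times-$\alp^2$ fraction of the pairs in $\binom{Z\setminus Z_\cQ}{2}$ may fail to be rigid. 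Even if every pair in $\binom{Z\setminus Z_\cQ}{2}$ were rigid, you would only learn that each such pair has exactly one common neighbour \emph{somewhere in $Z$} (possibly in $Z_\cQ$), and pairs meeting $Z_\cQ$ are unconstrained; neither conclusion makes $G[Z]$ a Steiner triple system, so the mod-$6$ parity obstruction is unavailable. The final step (``a codegree estimate at pairs $\{y,z\}$ that exploits $y\notin X$'') is not executed, and note also that your framework never invokes the maximality of $Z$, which is what handles the subcase $\ell=1$, $q=0$ (for any $y\in Y$, maximality of $Z$ forces a copy of $D$ on $y$ and three vertices of $Z$).

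The ingredient you are missing is the one the paper uses to close the tight case without any parity split: every $y\in Y$ satisfies $|N(y)\cap\binom{Z}{2}|<(1-\alp)\binom{|Z|}{2}$ by the very definition of $X$. Feeding this into your double count (now taken over all of $\binom{Z}{2}$) gives
$$
(\ell-1)\binom{|Z|}{2}\;\le\;\sum_{\{z,z'\}\in\binom{Z}{2}}|N(z,z')\cap Y|\;\le\;q(1-\alp)\binom{|Z|}{2}\;+\;\sum_{y'\in Y\setminus Y_\cQ}|N(y')\cap\tbinom{Z}{2}|,
$$
and the matching property bounds the last sum by $|Y\setminus Y_\cQ|\bigl(\tfrac{|Z|-3q}{2}+3q|Z|\bigr)=O(\alp^2 k)\cdot |Z|$. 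The factor $(1-\alp)$ supplies a saving of $\alp\,q\binom{|Z|}{2}$, which dwarfs the error term and rules out $q\le\ell-1$ for all $\ell\ge2$ in one stroke. I recommend you either adopt this mechanism for your tight case or restructure around it; as written, the odd tight case is a genuine gap.
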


\begin{proof}
If $\ell\leq 0$, there is nothing to show.  
If $\ell=1$, we have $|Y\cup Z|=3k+1$, and thus Fact~\ref{noSTS} 
implies that $G[Y\cup Z]$ contains a copy of $D$, which requires $|Y| \geq 1$.  
Now, if $q = 0$, then we could move a vertex from $Y$ to $Z$, which contradicts the maximality of $Z$.  
Finally, suppose $\ell\geq 2$, 
and observe that 
the quantity 
$|G[Z,Z,Y]| 
= 
|G[Z,Z,Y_{\cQ}]| + |G[Z,Z,Y \setminus Y_{\cQ}]|$ satisfies 
that 
\begin{multline*}  
(\ell - 1) \binom{|Z|}{2} 
\leq |G[Z,Z,Y]|  
\leq 
|Y_{\cQ}| (1 - \alp) \binom{|Z|}{2} + \left(\frac{|Z| - |Z_{\cQ}|}{2} + |Z_{\cQ}| |Z|\right)\big|Y\setminus Y_{\cQ} \big|  \\
= 
q (1 - \alp) \binom{|Z|}{2} + \left(\frac{|Z| - 3q}{2} + 3q |Z|\right)\big(|Y| - q\big) 
\stackrel{\text{(\ref{Yupper})}}{\leq}
q (1 - \alp) \binom{|Z|}{2} + 16 \alp^2 q |Z| k.  
\end{multline*}  
Now, if $q \leq \ell - 1$, then 
$$
1 \leq 1 - \alp + 32\frac{\alp^2 k}{|Z| - 1} 
\stackrel{\text{(\ref{Yupper})}}{\leq}
1 - \alp + 16 \alp^2, 
$$
a contradiction.  
\end{proof}

Note that, on account of the claim above, 
\begin{equation}
\label{eqn:sizeofq-ell}  
0 \leq q - \ell 
\stackrel{\text{(\ref{eqn:sizeofell})}}{\leq} 
|Y| + 3\e_0 k
\leq 
|Y| + 4\alpha^2 k
\stackrel{\text{(\ref{Yupper})}}{\leq} 
8 \alpha^2 k.  
\end{equation}

\begin{claim}\label{yzx}
For all $y\in Y\setminus Y_{\cQ}$ and $z\in Z\setminus Z_{\cQ}$, $|N(y,z)\cap X|\geq (1-\alpha)|X|$.  
\end{claim}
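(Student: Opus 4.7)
The plan is to decompose $N(y,z) \subseteq V(G)\setminus\{y,z\}$ according to the partition $X \cup Y \cup Z$ and use the minimum codegree bound $|N(y,z)| \geq k$ together with the smallness of $Y$ and of $Z_{\cQ}$. Explicitly,
\[
|N(y,z) \cap X| = |N(y,z)| - |N(y,z)\cap Y| - |N(y,z)\cap Z| \geq k - |Y| - |N(y,z)\cap Z|,
\]
and~(\ref{Yupper}) already controls $|Y| \leq 4\alpha^2 k$.  The crux of the claim --- and the step I expect to be the main obstacle --- is therefore to bound $|N(y,z)\cap Z|$ by something of order $\alpha^2 k$.

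For this, I would use the maximality of the partial tiling $\cQ$.  Suppose, for contradiction, that two distinct vertices $z_1, z_2 \in Z \setminus Z_{\cQ}$ both lay in $N(y,z)$.  Since $Y \cap Z = \emptyset$, the four vertices $y, z, z_1, z_2$ are distinct, and the triples $\{y,z,z_1\}$ and $\{y,z,z_2\}$ are both in $E(G)$ and share exactly the pair $\{y,z\}$.  These two triples form a copy of $D$ with three vertices in $Z \setminus Z_{\cQ}$ and one vertex in $Y \setminus Y_{\cQ}$, so appending this copy to $\cQ$ yields a strictly larger $D$-tiling of the prescribed form, contradicting the maximality of $\cQ$.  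Hence $|N(y,z) \cap (Z \setminus Z_{\cQ})| \leq 1$, which gives
\[
|N(y,z) \cap Z| \leq 1 + |Z_{\cQ}| = 1 + 3q.
\]

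To close the argument, I would combine~(\ref{eqn:sizeofell}) and~(\ref{eqn:sizeofq-ell}) to get $q \leq 12\alpha^2 k$, and hence $|N(y,z)\cap Z| \leq 1 + 36\alpha^2 k$.  Plugging back in yields
\[
|N(y,z)\cap X| \geq k - 4\alpha^2 k - 1 - 36\alpha^2 k = k(1-40\alpha^2) - 1.
\]
Comparing against $|X| \leq k(1 + 3\e_0)$ from~(\ref{Yupper}), the desired inequality $|N(y,z)\cap X| \geq (1-\alpha)|X|$ reduces to a routine check that holds comfortably once $n$ is large, since $\alpha^3 = \e_0 = 10^{-18}$ makes the linear term in $\alpha$ dominate all the quadratic-in-$\alpha$ error terms.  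No new ideas are needed beyond the maximality argument; everything else is bookkeeping with the estimates already in place.
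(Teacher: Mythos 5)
Your proof is correct and follows essentially the same route as the paper: the key step in both is that maximality of $\cQ$ forces $|N(y,z)\cap(Z\setminus Z_{\cQ})|\leq 1$ (you spell out the two-triples-sharing-$\{y,z\}$ argument that the paper leaves implicit), after which one bounds $|N(y,z)\cap Z|\leq 3q+1$ and finishes by bookkeeping. The only cosmetic difference is that you bound $q$ via $\ell$ and~(\ref{eqn:sizeofq-ell}) where the paper uses $q\leq|Y|$ directly; both give $q=O(\alpha^2 k)$ and the final comparison with $|X|\leq(1+3\e_0)k$ goes through either way.
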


\begin{proof}
Fix $y\in Y\setminus Y_{\cQ}$ and $z\in Z\setminus Z_{\cQ}$.  By the maximality of $\cQ$, we have $|N(y,z)\cap Z|\leq |Z_{\cQ}| +1 = 3q + 1$.  
As such, since $|Y| \geq q$, 
we have 
$$
|N(y, z) \cap X| \geq k - (3q + 1) - (|Y| - 1) 
\geq k - 4|Y| 
\stackrel{\text{(\ref{Yupper})}}{\geq} 
(1 - 16 \alp^2 ) k 
\stackrel{\text{(\ref{Yupper})}}{\geq} 
\frac{1 - 16 \alp^2}{1 + 3\e_0} |X| 
\geq (1 - \alp)|X|.  
$$
\end{proof}

\noindent {\bf The partial $D$-tiling $\cR$.}  
We now use~\eqref{zzx} and Claim~\ref{yzx} to build 
a collection $\cR$ of 
$|Y\setminus Y_{\cQ}|$ 
vertex-disjoint copies of $D$, each with $1$ vertex in $Y\setminus Y_{\cQ}$, $1$ vertex 
in $X$, and two vertices in $Z\setminus Z_{\cQ}$.  
For sake of argument, 
assume $|Y \setminus Y_{\cQ}| \geq 1$.
Inductively, assume we have obtained $0 \leq i < |Y\setminus Y_{\cQ}|$ 
vertex-disjoint copies of $D$, each with $1$ vertex in $Y\setminus Y_{\cQ}$, $1$ vertex 
in $X$, and two vertices in $Z\setminus Z_{\cQ}$.  
Arbitrarily 
select an uncovered $y' \in Y\setminus Y_{\cQ}$
and uncovered $z_1', z_2' \in Z \setminus Z_{\cQ}$, noting that the latter is possible 
since at most $|Z_{\cQ}| + 2i \leq 5|Y| \leq |Z| - 2$ 
(cf.~(\ref{Yupper})) 
vertices
in $Z$ are unavailable for selection.  
Since $|N(y', z_1') \cap N(z_1', z_2') \cap X| \geq (1 - 2\alp)|X|$, 
we have at least $(1 - 2\alp) |X| - i \geq (1 - 2\alp ) |X| - |Y| > 0$ (cf.~(\ref{Yupper})) choices for an uncovered
vertex $x' \in X$, to complete the $(i+1)^{\rm st}$ copy of $D$.

Note that all vertices of $Y$ are covered by  $\cQ$ or $\cR$.  Let $Z_{\cQ,\cR} \supset Z_{\cQ}$ denote the set of vertices of $Z$ covered by $\cQ$ or $\cR$, 
and let $X_{\cR}$ denote the set of vertices of $X$ covered by $\cR$ (no vertices of $X$ were covered by $\cQ$).  
Observe that 
$$
|X \setminus X_{\cR}| = |X| - (|Y| - |Y_{\cQ}|) 
= k - |Y| + (q - \ell), \text{ and}   
$$
\begin{equation}
\label{eqn:sizeofZ-ZQR}  
|Z \setminus Z_{\cQ, \cR}|=  |Z| - |Z_{\cQ}|-2(|Y|-|Y_{\cQ}|) = 3(k-|Y|)-(q-\ell), 
\end{equation}
where we used that $|Z| = 4k - |X| - |Y| = 3k + \ell - |Y|$.   \\

\noindent {\bf The partial $D$-tiling $\cS$.}  
We now obtain a collection $\cS$ of $q - \ell$ vertex-disjoint copies of $D$, each with 2 vertices in $X \setminus X_{\cR}$ and 2 vertices in $Z \setminus Z_{\cQ, \cR}$.  
Indeed, arbitrarily pick vertices $z_1, z_1', \dots, z_{q-\ell}, z_{q - \ell}' \in Z \setminus Z_{\cQ, \cR}$, which is possible since 
$$
|Z \setminus Z_{\cQ, \cR}|
- 2 (q - \ell) 
\stackrel{(\ref{eqn:sizeofZ-ZQR})}{=}  
3 (k - |Y| - (q - \ell))  
\stackrel{\text{
(\ref{Yupper}), 
(\ref{eqn:sizeofq-ell})}}{\geq} 
3 k (1 - 12 \alpha^2)  \geq 2.  
$$
Inductively, assume we have covered 
$0 \leq i < q - \ell$ 
pairs $z_1, z_1', \dots, z_i, z_i'$
by vertex-disjoint copies $D_1, \dots, D_i$ of $D$, where each $D_j$, $0 \leq j \leq i$, has vertices 
$\{z_j, z_j', x_j, x_j'\}$, where $x_j, x_j' \in X \setminus X_{\cR}$.  
We infer from~\eqref{zzx} that 
$$
\big|N(z_1, z_1') \cap \big(X \setminus (X_{\cR} \cup \{x_1, x_1',\dots, x_i, x_i'\})\big)\big| \geq (1 - \alp) |X| - |X_{\cR}| - 2i 
\geq (1 - \alp) |X| - |Y| - 2(q - \ell) 
\geq 2, 
$$
where the last inequality holds on account of~(\ref{Yupper})
and~(\ref{eqn:sizeofq-ell}).  
We thus obtain 
the $(i+1)^{\rm st}$ copy of $D$.

Let $Z_{\cQ,\cR, \cS} \supset Z_{\cQ, \cR}$ denote the set of vertices of $Z$ covered by $\cQ$, $\cR$ or $\cS$, and let $X_{\cR, \cS} \supset X_{\cR}$ denote the set of vertices of $X$ covered by $\cR$ or $\cS$.    
Set $m:= |X \setminus X_{\cR,\cS}|$ and note that 
\begin{equation}
\label{ZQRS}  
m = 
|X \setminus X_{\cR, \cS}| 
\stackrel{\text{(\ref{eqn:sizeofZ-ZQR})}}{=}  
k - |Y| - (q - \ell) \quad \text{and} \quad |Z \setminus Z_{\cQ, \cR, \cS}| 
\stackrel{\text{(\ref{eqn:sizeofZ-ZQR})}}{=}  
3\big(k- |Y| - (q - \ell)\big) = 3m.  
\end{equation}
We conclude the proof of Theorem~\ref{extreme} by building the remaining partial $D$-tiling $\cT$.   \\

\noindent {\bf The partial $D$-tiling $\cT$.}  
Arbitrarily partition $Z \setminus Z_{\cQ, \cR, \cS} = Z_1 \cup Z_2 \cup Z_3$ into three sets of size $m$, and for simplicity of notation, write $X_0 = X \setminus X_{\cR, \cS}$.      
Define the following auxiliary 4-partite 4-graph $H$ with 4-partition $V(H) = X_0 \cup Z_1 \cup Z_2 \cup Z_3$, obtained by including each $\{x, z_1, z_2, z_3\} \in H$, $x \in X_0$, $z_i \in Z_i$ for $i = 1, 2, 3$,  
if 
$\{x, z_1, z_2, z_3\}$ spans a copy of $D$ in $G$.  
We claim that $H$ satisfies the hypothesis of 
Theorem~\ref{4graph} with $\g = 1/2$, and hence contains a perfect matching, which will then define $\cT$ and finish our proof
of Theorem~\ref{extreme}.

To bound $\del_H(Z_1, Z_2, Z_3)$, 
fix $z_1 \in Z_1, z_2 \in Z_2, z_3 \in Z_3$.  
We infer from~\eqref{zzx} that 
\begin{multline*}  
|N_H(z_1, z_2, z_3)| \geq \big| N_G (z_1, z_2) \cap N_G (z_1, z_3) \cap X_0 \big| \geq (1 - 2\alp)|X| - |X_{\cR, \cS}|  \\
\geq 
(1 - 2\alp) |X| - |Y| - 2(q - \ell) 
\stackrel{\text{(\ref{Yupper}), (\ref{eqn:sizeofq-ell})}}{\geq} 
(1 - 2\alp)|X| - 20 \alp^2 k   
\stackrel{\text{(\ref{Yupper})}}{\geq}
\big((1 -2\alp)((1 - 4\alp^2) - 20 \alp^2\big)) k  \\
\stackrel{\text{(\ref{Yupper})}}{\geq}
\frac{1 - 26 \alp}{1 + 3\e_0} |X|  
\geq (1 - 27 \alp) |X| \geq (1 - 27 \alp) |X_0|  
= (1 - 27 \alp) m.  
\end{multline*}  
Thus, $\del_H(Z_1, Z_2, Z_3) \geq (1 - 27 \alp)m$.

To bound $\del_H(X_0)$, 
fix $x \in X_0$, and 
for clarity of notation in what follows, write $N_G(x) = G_x$.  
By the definition of $X$, we have that $|G_x[Z]| \geq (1 - \alp) \tbinom{|Z|}{2}$, and so all but at most $\sqrt{\alp} |Z|$ vertices 
$z \in Z$ satisfy that $\deg_{G_x[Z]}(z) \geq (1 - \sqrt{\alp}) |Z|$.  
For each such $z \in Z$ and $i = 1, 2, 3$, 
$$
|N_{G_x}(z) \cap Z_i| \geq (1 - \sqrt{\alp})|Z| - |Z_{\cQ, \cR, \cS}| - 2m  
\stackrel{\text{(\ref{ZQRS})}}{=} 
m - \sqrt{\alp} |Z|   
= 
\left(1 - \sqrt{\alp}\frac{|Z|}{m}\right)m.  
$$
Since, by~(\ref{Yupper}) and~(\ref{ZQRS}), we have 
\begin{equation}
\label{m}  
3m
= 
|Z| - |Z_{\cQ, \cR, \cS}|
= 
|Z| - \big(3q + 2(|Y| - q) + 2(q - \ell)\big) 
\geq 
|Z| - 5|Y| + 2 \ell 
\stackrel{\text{(\ref{Yupper}), (\ref{eqn:sizeofell})}}{\geq}  
|Z| - 26 \alp^2 k 
\stackrel{\text{(\ref{Yupper})}}{\geq}  
\frac{|Z|}{2}, 
\end{equation} 
we conclude that 
$$
|N_{G_x}(z) \cap Z_i| \geq (1 - 6\sqrt{\alp}) m.  
$$
As such, 
$$
|N_H(x)|
\geq \sum_{z_1 \in Z_1} 
|N_{G_x}(z_1) \cap Z_2| 
|N_{G_x}(z_1) \cap Z_3| 
\geq 
\left(m - \sqrt{\alp}|Z|\right)
\left(\left(1 - 6\sqrt{\alp}\right) m\right)^2
\stackrel{\text{(\ref{m})}}{\geq}  
\left(1 - 6\sqrt{\alp}\right)^3 m^3,  
$$
and so $\del_H(X_0) \geq (1 - 234\sqrt{\alp}) m^3$.

The obtained bounds on $\del_H(Z_1, Z_2, Z_3)$ and $\del_H(X_0)$ then implies 
$$
m \del_H(X_0) + m^3 \del_H(Z_1, Z_3, Z_3) \geq 
\left(2 - 234\sqrt{\alp} - 
27 \alp
\right) m^4 
\geq 
\left(2 - 261\sqrt{\alp} 
\right) m^4 
> 
\frac{3}{2} m^4 
$$
so that, as claimed, 
$H$ satisfies the hypothesis of 
Theorem~\ref{4graph} with $\g = 1/2$.

\section{Proof of Theorem~\ref{non-extreme}}
\label{section:non-extreme}

Our proof of Theorem~\ref{non-extreme} is based on the following two lemmas, the second of which mirrors an `absorption' lemma of R\"odl, Ruci\'nski and Szemer\'edi~\cite{RRS}.  

\begin{lemma}\label{almost-perfect}
For all $\g > 0$ and sufficiently large integers $m$ divisible by $4$, the following holds.  Let $H$ be a 3-graph of order $m$.  If $\del_2(H) \geq \left(\tfrac{1}{4} - \g\right) m$ and $H$ is not $(8\g)$-extremal, then $H$ admits a $D$-tiling covering all but $50/\g$ vertices.  
\end{lemma}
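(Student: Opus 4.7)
My plan is a maximality and swap argument. Let $\cM$ be a maximum $D$-tiling in $H$, set $R := V(H) \setminus V(\cM)$ and $r := |R|$; assuming for contradiction that $r > 50/\gamma$, I aim to construct a $D$-tiling of size $|\cM| + 1$.

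By the maximality of $\cM$, the induced $3$-graph $H[R]$ contains no copy of $D$, and so every pair of vertices in $R$ has at most one common neighbor inside $R$. Combined with $\del_2(H) \geq (\tfrac14 - \gamma)m$, this yields
\[
|N_H(u,v) \cap V(\cM)| \;\geq\; (\tfrac14 - \gamma)m - 1 \qquad \text{for every } \{u,v\} \in \tbinom{R}{2}.
\]
I next invoke the non-extremal hypothesis. Extending $R$ to a maximal $D$-free set $S \supseteq R$, we must have $|S| < (1 - 8\gamma)\tfrac{3m}{4}$, and hence $T := V(H) \setminus S$ lies inside $V(\cM)$ and satisfies $|T| > \tfrac{m}{4} + 6\gamma m$. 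Since $S$ is $D$-free, the sharper bound $|N_H(r_1, r_2) \cap T| \geq (\tfrac14 - \gamma)m - 1$ also holds for every pair $\{r_1, r_2\} \in \tbinom{R}{2}$, and by the maximality of $S$, each $t \in T$ completes to a copy of $D$ whose other three vertices lie in $S$.

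The heart of the proof is the following swap. I seek $t \in T$, a tile $A \ni t$ in $\cM$, and vertices $r_1, r_2, r_3, r_4 \in R$ so that (a) $\{t, r_1, r_2, r_3\}$ spans a copy of $D$ and (b) $(A \setminus \{t\}) \cup \{r_4\}$ spans a copy of $D$. Given such a configuration, replacing $A$ by these two $D$'s yields a $D$-tiling of size $|\cM| + 1$, contradicting maximality. For (a), a double count using $|N_H(r_1, r_2) \cap T| \geq (\tfrac14 - \gamma)m - 1$ produces some $t \in T$ and $r_1 \in R$ with $|N_H(t, r_1) \cap R| \geq r/4$; for $r > 8$, one then picks distinct $r_2, r_3 \in N_H(t, r_1) \cap R$ and obtains a $D$ through the shared pair $\{t, r_1\}$. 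For (b), it is convenient to arrange $t$ to be a low-degree vertex of its tile $A$ (one of the two vertices not belonging to the common pair of the two edges of $A$), so that $A \setminus \{t\}$ is itself an edge of $H$; then any $r_4 \in R \setminus \{r_1, r_2, r_3\}$ lying in the common neighborhood of some pair inside $A \setminus \{t\}$ suffices.

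The main obstacle is coordinating (a) and (b) compatibly. The vertex $t$ produced by the double count may fail to be a low-degree vertex of its tile, and the three pairs inside $A \setminus \{t\}$ may have few common neighbors inside $R$ (the pair-degree hypothesis only bounds $|N_H(x,y)|$ in all of $V(H)$, not $|N_H(x,y) \cap R|$). These two concerns should be resolved by a more refined averaging—restricting attention, for instance, to tiles $A$ with $|A \cap T| \geq 2$ (abundant by Step~2) and selecting $t$ as a low-degree vertex of $A$ whenever possible, or else falling back on a more elaborate swap exchanging two tiles for three $D$-tiles using four vertices of $R$. In either approach, the threshold $r > 50/\gamma$ arises as the natural slack needed to absorb the bounded number of bad configurations and to keep the various double counts in the positive range.
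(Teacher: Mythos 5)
Your setup (maximum tiling $\cM$, leftover set $R$ with $|R|>50/\g$, the observation that $H[R]$ is $D$-free so every pair of $R$ sends almost all of its co-neighborhood into $V(\cM)$) matches the paper's opening moves, and your condition (a) is indeed attainable by the double count you sketch. But the argument breaks at exactly the point you flag: condition (b). There is no reason why any pair inside $A\setminus\{t\}$ should have a common neighbor in $R$ --- the codegree hypothesis controls $|N_H(x,y)|$ in all of $V(H)$, not in $R$, and all of those common neighbors may lie in $V(\cM)$. Your suggested repairs (restricting to tiles with $|A\cap T|\geq 2$, or trading two tiles for three) do not resolve this: in each variant the extra, $(+1)$-st copy of $D$ still has to be manufactured out of the three (or six) covered vertices left over from the destroyed tiles together with vertices of $R$, and nothing in your argument forces such a copy to exist. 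Moreover, your use of the non-extremality hypothesis (extending $R$ to a maximal $D$-free set $S$ and concluding that $T=V\setminus S$ is large) yields only size information about $T$; it provides no structural link between $T$, the tiles of $\cM$, and $R$ of the kind your swap needs.

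The paper's proof supplies precisely the missing regeneration mechanism. Writing $W=R$, call $v\in V(\cM)$ $W$-big if its link restricted to $W$ has at least $10|W|$ edges; such a link contains a path of length $2$, so $v$ together with three vertices of $W$ spans a copy of $D$, and one can greedily extract up to four pairwise disjoint such $2$-paths for up to four big vertices. A short exchange argument shows each tile of $\cM$ contains at most one $W$-big vertex, and the double count on $H[W,W,V(\cM)]$ (using $|W|>50/\g$) shows there are $b\geq(\tfrac14-2\g)m$ big vertices. Hence the set $S_B$ of the $3b\geq(1-8\g)\tfrac{3m}{4}$ non-big vertices lying in tiles that contain a big vertex is large enough that non-extremality produces a copy $D_0$ of $D$ inside $S_B$. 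The vertices of $D_0$ meet $j\in\{2,3,4\}$ tiles; deleting those $j$ tiles and adding $D_0$ together with the $j$ copies of $D$ regenerated by their big vertices from disjoint $2$-paths in $W$ replaces $j$ tiles by $j+1$, contradicting maximality. The two ideas absent from your proposal are the big-vertex notion (which guarantees that every destroyed tile can be rebuilt entirely from $W$) and the application of non-extremality to the covered small vertices $S_B$ rather than to an extension of $R$.
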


\begin{lemma}
\label{absorbing-lemma}
For all $\alpha >0$
and sufficiently large integers $n$ divisible by 4, the following holds.  Let $G$ be a 3-graph of order $n$.  If $\del_2(G) \geq n/4$, then there exists $A \subset V(G)$ of size $|A| \leq \alp n$ so that,  
for every $W \subset V \setminus A$ of size $|W| \leq  50/\alp$ 
for which $|A \cup W|$ is divisible by 4, 
the hypergraph $G[A \cup W]$ is $D$-tileable.  
\end{lemma}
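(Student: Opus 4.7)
The plan is to establish Lemma~\ref{absorbing-lemma} by the absorption technique of R\"odl, Ruci\'nski and Szemer\'edi. Call a $4$-set $S\subseteq V(G)$ an \emph{absorber} for a disjoint $4$-set $T$ if (a) $G[S]$ spans a copy of $D$ and (b) $G[S\cup T]$ admits a partition into two vertex-disjoint copies of $D$. The intended use is a swap: whenever $A$ carries a fixed $D$-tiling in which $S$ is one of the copies, replacing that copy by the two copies of $D$ on $S\cup T$ yields a $D$-tiling of $A\cup T$. Given any $W\subseteq V\setminus A$ with $|W|\le 50/\alpha$ and $|A\cup W|\equiv 0\pmod 4$, we will arrange $|A|\equiv 0\pmod 4$ so that $|W|\equiv 0\pmod 4$, partition $W$ into $t\le 13/\alpha$ $4$-sets $W_1,\ldots,W_t$, and perform the swap against an unused absorber for each $W_j$.

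The heart of the argument is the combinatorial claim that for every $4$-set $T$ the family $L(T)$ of absorbers for $T$ has size at least $cn^4$, for some constant $c>0$ depending only on $\delta_2(G)\ge n/4$. A concrete template pairs $T$ as $\{t_1,t_2\},\{t_3,t_4\}$, picks $\{s_1,s_2\}\in\binom{N(t_1,t_2)}{2}$ (so $\{s_1,s_2,t_1,t_2\}$ spans $D$ with shared pair $\{t_1,t_2\}$) and $\{s_3,s_4\}\in\binom{N(t_3,t_4)}{2}$ analogously; this supplies $\Omega(n^4)$ candidate $4$-sets $S=\{s_1,s_2,s_3,s_4\}$, and one must argue that a positive fraction of them also satisfy (a). I expect this to follow from supersaturation -- the codegree hypothesis alone gives $\#D\ge\binom{n}{2}\binom{n/4}{2}$, so a positive fraction of all $4$-sets in $V$ span $D$ -- combined with averaging over the three possible pairings of $T$. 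A more robust route, if needed, is to enlarge the absorber to $8$ or $12$ vertices so that the defining codegree constraints are imposed sequentially in a \emph{chain} $s_1s_2\leadsto s_3s_4\leadsto\cdots$, making the count transparent and avoiding two-sided intersections.

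Given $|L(T)|\ge cn^4$ for every $T$, the set $A$ is built probabilistically. Include each vertex of $V(G)$ in $A^*$ independently with probability $p=\alpha/K$ for a sufficiently large constant $K$; then $\mathbb{E}|A^*|=pn$ and Chernoff yield $|A^*|\le\alpha n/2$ with high probability. For each fixed $T$, the expected number of absorbers for $T$ contained in $A^*$ is at least $p^4|L(T)|\ge (c\alpha^4/K^4)n^4$; Janson's inequality (or a direct variance computation) gives sharp concentration, and a union bound over the at most $n^4$ choices of $T$ shows that, with positive probability, $A^*$ contains at least $100/\alpha$ absorbers for every $T$. A standard deletion step (each absorber conflicts with at most $O(n^3)$ others sharing a vertex) converts \emph{many absorbers} into \emph{many pairwise-disjoint absorbers} per $T$. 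Finally, enlarge $A^*$ to $A$ by appending a $D$-tiling of its non-absorber part (adjusting by a bounded number of vertices so that $|A|\equiv 0\pmod 4$), and fix a $D$-tiling of $A$ in which the designated absorbers appear as distinguished copies.

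The principal obstacle is the absorber-counting claim. At the codegree threshold $\delta_2(G)\ge n/4$ the naive count is confronted with the fact that, once $\{s_1,s_2\}$ and $\{s_3,s_4\}$ are chosen from their respective neighborhoods, the further requirement that $\{s_1,s_2,s_3,s_4\}$ spans $D$ may translate into a condition like $|N(s_1,s_2)\cap N(s_3,s_4)|\ge 2$, which is not automatic at this codegree level. Overcoming this -- via the supersaturation/averaging sketched above, by decoupling the constraints through a longer chain-type absorber, or (if necessary) by a preliminary extremal/non-extremal dichotomy on $G$ -- will be the technical heart of the argument.
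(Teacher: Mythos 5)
Your overall strategy (count absorbers for every $4$-set, select a random family, delete conflicts, then absorb $W$ four vertices at a time) is the same as the paper's, but the proposal has a genuine gap at exactly the point you flag: the claim that every $4$-set $T$ has $\Omega(n^4)$ \emph{four-vertex} absorbers is never established, and your suggested route via global supersaturation does not close it. Knowing that a positive fraction of all $4$-sets of $V$ span $D$ says nothing about the specific $\Omega(n^4)$ candidates $\{s_1,s_2,s_3,s_4\}$ with $\{s_1,s_2\}\subset N(t_1,t_2)$ and $\{s_3,s_4\}\subset N(t_3,t_4)$; these two families could in principle be disjoint, and at codegree $n/4$ the extra condition that $\{s_1,s_2,s_3,s_4\}$ itself spans $D$ is not forced. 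The paper resolves this precisely by the fallback you mention in passing but do not carry out: it uses $8$-vertex absorbers. Concretely, with $V_1=N(u_1,u_2)$, $V_2=N(u_3,u_4)$ and $V_3$ the union of the codegree neighborhoods of pairs in $V_1\times V_2$, one extracts disjoint $W_1,W_2,W_3$ with $|G[W_1,W_2,W_3]|\ge c_1n^3$ and applies Erd\H{o}s's supersaturation theorem to find $\Omega(n^9)$ copies of $K^3_{3,3,3}$; deleting one $W_3$-vertex from such a copy yields an $8$-set $S$ for which explicit $D$-tilings of both $G[S]$ and $G[S\cup U]$ can be written down. This ``chain'' decouples the constraints exactly as you anticipated, and without some such device your counting claim remains unproved.

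A secondary, fixable, issue is your random construction. You select \emph{vertices} of $V$ independently, so the resulting set $A^*$ has a leftover ``non-absorber part,'' and your plan requires a $D$-tiling of $A$ containing the designated absorbers as copies; a random leftover vertex set need not induce a $D$-tileable subhypergraph, and ``appending a $D$-tiling of its non-absorber part'' is not justified. The clean repair (and what the paper does) is to sample \emph{$8$-sets} from a binomial random $8$-uniform hypergraph, delete intersecting pairs and any set that absorbs nothing, and let $A$ be the union of the surviving matching: then $A$ is a disjoint union of sets each of which is itself $D$-tileable, so the unused absorbers tile themselves and each used absorber $S_i'$ tiles $S_i'\cup W_i$. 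Alternatively, if you keep vertex sampling, you must take $A$ to be exactly the union of pairwise disjoint designated absorbers. With the $8$-vertex absorber count in hand, the rest of your argument (Chernoff/Markov, deletion, greedy assignment of the at most $13/\alpha$ sets $W_i$ to distinct absorbers) goes through as in the paper.
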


\noindent We defer the proofs of 
Lemmas \ref{almost-perfect} and \ref{absorbing-lemma} to Sections~\ref{sec:almost-perfect} and~\ref{sec:absorbing-lemma} respectively 
in favor of first proving Theorem~\ref{non-extreme}.  

\begin{proof}[Proof of Theorem \ref{non-extreme}]

Let $\eps > 0$ be given, together with a sufficiently large integer $n$ which is divisible by 4.  
Let $G$ be a $3$-graph of order $n$ satisfying~(\ref{eqn:codeg}) which is not $\e$-extremal.  
For $\alp =  \eps / 9$, let $A \subset V(G)$ be the set given by Lemma~\ref{absorbing-lemma}.  
Set $H = G[V \setminus A]$, 
and write $m = n - |A|$.

We claim that $H$ satisfies the hypothesis of Lemma~\ref{almost-perfect} with $\g = \alp$.  Indeed, 
note that 
$$
\del_2(H) \geq \frac{n}{4} - |A| \geq \frac{n}{4} - \alp n = \left(\frac{1}{4} - \alp\right)n \geq
\left(\frac{1}{4} - \alp\right)m.  
$$
Observe, moreover, that $H$ is not $(8\alp)$-extremal.  Indeed, if $S \subset V(H)$ satisfies that $H[S]$ is $D$-free, then $G[S]$ is also $D$-free, and   
if 
$$
|S| \geq 
(1 - 8\alp) \frac{3m}{4} = 
(1 - 8\alp) \frac{3}{4} (n - |A|) \geq 
(1 - 8\alp)(1 - \alp) \frac{3n}{4} 
\geq 
(1 - 9\alp) \frac{3n}{4} = 
(1 - \eps) \frac{3n}{4}, 
$$
then $G$ would be $\eps$-extremal, a contradiction.

Lemma~\ref{almost-perfect} implies that 
$H$ admits a $D$-tiling covering all but $50/\alp$ vertices.  Set $W \subset V(H)$ to be the set of vertices (if any) 
uncovered by this $D$-tiling.  Since 
$|V(H) \setminus W|$ is divisible by 4, it must be that $|A \cup W|$ is divisible by 4, and so Lemma~\ref{absorbing-lemma} guarantees that $G[A \cup W]$ is $D$-tileable.  
Thus, $G$ is $D$-tileable.

\end{proof}

\subsection{Proof of Lemma~\ref{almost-perfect}}    
\label{sec:almost-perfect}
Let $\g > 0$ be given, and let 
$m$ be a sufficiently large integer which is divisible by 4.  
Let $H$ be a 3-graph of order $m$, which is not $(8\g)$-extremal, and for which 
$\del_2(H) \geq\left(\tfrac{1}{4} - \g\right)m$.  
We prove that $H$ contains a $D$-tiling covering all but $50/\g$ vertices.  
To that end, 
let $\cM$ be a maximum $D$-tiling in $H$, but assume, on the contrary, that $\cM$ leaves more than $50/\g$ vertices uncovered.

We use the following notation and terminology.  Let $V_{\cM}$ denote the set of vertices of $H$ covered by $\cM$, and let $W = V(H) \setminus V_{\cM}$.  
For a vertex $v \in V_{\cM}$, write $H_v[W]$ for $N_H(v) \cap \tbinom{W}{2}$, 
and say that 
$v \in V_{\cM}$ is {\it $W$-big} if 
$|H_v[W]| 
\geq 10|W|$, and {\it $W$-small} otherwise.  
Observe that every element $D_0 \in \cM$ contains at most one $W$-big vertex.  Indeed, assuming otherwise, let $u, v\in V(D_0)$ both be $W$-big vertices.
Since 
$|H_u[W]|  \geq 10  
|W| > |W| /2$, 
the graph $H_u[W]$
contains a path of length 2, with vertices denoted by $w_1, w_2, w_3$.  The graph 
$H_v[W\setminus \{w_1, w_2, w_3\}]$ then has size
\begin{equation}
\label{eqn:2path}
\left| H_v\left[W\setminus \{w_1, w_2, w_3\}\right] \right| 
\geq 
|H_v[W]|  - 3 |W| \geq 7 |W| > |W| /2,  
\end{equation}
and so $H_v[W\setminus \{w_1, w_2, w_3\}]$ 
contains a path of length 2, with vertices denoted by $w_1', w_2', w_3'$.  
Then, $\{u, w_1, w_2, w_3\}$ and $\{v, w_1', w_2', w_3'\}$ span vertex-disjoint copies of $D$, which can replace 
$D_0$ in $\cM$ to contradict that $\cM$ was a maximum $D$-tiling in $H$.

Now, write $B$ for the set of $W$-big vertices $v \in V_{\cM}$, and write $|B| = b$.
We now observe that $b \geq \left(\tfrac{1}{4} - 2\g\right)m$.  
Indeed, write $H[W,W,V_{\cM}]$ for the set of triples from $H$ containing exactly two vertices from $W$.  From our definitions above, 
note that 
$$
\left| H[W,W,V_{\cM}] \right| \leq b \left( 30 |W| + \binom{|W|}{2} \right) + 40 (|\cM| - b) |W| \leq 
b \binom{|W|}{2} + 40 |\cM| |W| \leq b \binom{|W|}{2} + 10 m |W|.  
$$
On the other hand, the maximality of $\cM$ implies that $H[W]$ is $D$-free, and so 
$$
\left| H[W,W,V_{\cM}] \right| \geq 
\left(\left(\frac{1}{4} - \g\right)m - 1\right) \binom{|W|}{2}.  
$$
The inequalities above imply that 
$$
b \geq \left(\frac{1}{4} - \g\right) m - 1 - \frac{20 m}{|W| - 1} \geq 
\left(\frac{1}{4} - \g\right) m - 1 - \frac{40 m}{|W|}, 
$$
and by our assumption that $|W| > 50/\g$, we infer that $b \geq \left(\tfrac{1}{4} - 2\g\right)m$, as claimed.

Now, write $\cM_{B} \subset \cM$ for elements of $\cM$ which contain a $W$-big vertex, and let $V_{\cM_B}$  
denote the set of vertices of $H$ covered by $\cM_B$.  
Then, $S_B = V_{\cM_B} \setminus B$ consists of $W$-small vertices and we have 
$|S_B| = 3|B| \geq (1 - 8\g) 3m/4$.  
Since $H$ is not $(8\g)$-extremal, $H[S_B]$ contains a copy $D_0$ of $D$, say with vertices $v_1, v_2, v_3, v_4$.  
Let $u_1, u_2, u_3, u_4$ denote 
the 
$W$-big vertices corresponding to $v_1, v_2, v_3, v_4$, respectively, in $\cM_B$.  Among $u_1, \dots, u_4$, at least two and at most 4 are distinct, and so w.l.o.g.,
let $u_1, \dots, u_j$, for some $j \in \{2, 3, 4\}$, denote the distinct vertices of $u_1, \dots, u_4$.  
For $1\leq i \leq j$, 
let $D_i \in \cM_B$ be the element containing $u_i$.

Similarly to~(\ref{eqn:2path}), 
the definition of a $W$-big vertex will guarantee, for each $1\leq i \leq j$, the existence of a 2-path
$P_2(u_i) \subset H_{u_i}[W]$ so that
$P_2(u_1), \dots, P_2(u_j)$ are each pair-wise vertex-disjoint.  
Indeed, if we already have the desired 2-paths $P_2(u_1), \dots, P_2(u_{i-1})$, where $2\leq i \leq j \leq 4$, then 
$$
\Big|H_{u_i}\Big[W \setminus \big(V(P_2(u_1)) \cup \dots \cup V(P_2(u_{i-1})) \big) \Big] \Big|
\geq 
|H_{u_i}[W]| - 3(i-1)|W|  
\geq |H_{u_i}[W]| - 9|W|  
\geq
|W| > |W|/2, 
$$
and so there exists a 2-path $P_2(u_i) \subset H_{u_i}[W]$ which is vertex-disjoint from each of $P_2(u_1), \dots, P_2(u_{i-1})$.

Clearly, for each $1\leq i \leq j$, $\{u_i\} \cup V(P_2(u_i))$ spans a copy of $D$, which we shall denote as 
$D^{u_i}$.  Then, $D^{u_1}, \dots, D^{u_j}$ are pair-wise vertex-disjoint 
copies of $D$, and so,  
deleting from $\cM$
the elements 
$D_1, \dots, D_j$ and adding $D_0, D^{u_1}, \dots, D^{u_j}$ contradicts that $\cM$ was a maximum $D$-tiling.  
This concludes the proof 
of Lemma~\ref{almost-perfect}.

\subsection{Proof of Lemma~\ref{absorbing-lemma} -- Absorption}  
\label{sec:absorbing-lemma}  
We shall prove the following stronger form of Lemma~\ref{absorbing-lemma}, which allows for a smaller co-degree and larger choices of subset $W$.  

\begin{lemma}[Lemma~\ref{absorbing-lemma} - strong form]  
\label{lem:abs}
For all $\alp, \del > 0$, there exists $\omega > 0$ so that for all sufficiently large integers $n$ divisible by 4, the following holds.  
Let $G$ be a 3-graph of order $n$.  If $\del_2 (G) \geq \del n$, then there exists $A \subset V(G)$ of size $|A| \leq \alp n$ so that, for every 
$W \subset V\setminus A$ of size $|W| \leq \omega n$ for which $|A \cup W|$ is divisible by 4, the hypergraph $G[A \cup W]$ is $D$-tileable.  
\end{lemma}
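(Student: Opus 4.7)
My plan is to implement the R\"odl--Ruci\'nski--Szemer\'edi absorbing strategy: structure $A$ as a disjoint union of ``absorber-gadgets'' (each individually $D$-tileable and, for every 4-set $T$, providing many ways to absorb $T$ into a larger tiling), then greedily absorb any small $W$ into $A$. The argument has four steps: (i) define absorber-gadgets; (ii) count, for each 4-set $T$, the gadgets in $V(G)$ that absorb $T$; (iii) produce $A$ via a probabilistic selection; (iv) absorb $W$.

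Fix a constant $c$ divisible by $4$ (say $c = 12$). An \emph{absorber-gadget} is a set $S \subset V(G)$ of size $c$ such that $G[S]$ is $D$-tileable; $S$ \emph{absorbs} a 4-set $T \subset V(G) \setminus S$ if $G[S \cup T]$ is also $D$-tileable. A representative realization takes $S = \{a_1, a_2, b_1, b_2\} \cup D_1 \cup D_2$, with $D_1, D_2$ disjoint copies of $D$ in $G$. For $T = \{v_1,\ldots,v_4\}$ with pair-partition $\{v_1,v_2\}\cup\{v_3,v_4\}$, the extended tiling of $S \cup T$ pairs $\{v_1,v_2\}$ with $\{a_1,a_2\}$, pairs $\{v_3,v_4\}$ with $\{b_1,b_2\}$, and keeps $D_1,D_2$ intact (four tiles), while the self-tiling of $S$ combines $\{a_1,a_2,b_1,b_2\}$ with part of $D_1$ into three $D$-spanning blocks. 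To count absorbers of a fixed $T$, I chain-select the vertices $a_1,a_2,b_1,b_2$ in an order where each lies in the co-neighborhood of a single previously chosen pair --- e.g.\ $a_1 \in N(v_1,v_2)$ and $a_2 \in N(v_1,a_1)$, ensuring $\{v_1,v_2,a_1,a_2\}$ spans $D$ via the shared pair $\{v_1,a_1\}$ --- and similarly for $b_1,b_2$ and the copies $D_1,D_2$. Each selection contributes a factor $\Omega(\delta n)$, yielding $\Omega(\beta n^c)$ absorbers per 4-set for some $\beta=\beta(\delta)>0$.

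Given this abundance, I include each vertex of $V(G)$ in a pool $X$ independently with probability $p = \alpha/2$; Chernoff gives $|X| \leq \alpha n$ w.h.p., and standard polynomial concentration plus union bound over $\binom{n}{4}$ 4-sets yields that, with positive probability, every $T$ has at least $\omega n$ absorber-gadgets inside $X$. I then extract from $X$ a maximal disjoint family of absorber-gadgets and let $A$ be their union; so $G[A]$ is $D$-tileable as the disjoint union of individual gadget-tilings. Given $W$ with $|W| \leq \omega n$ and $|A\cup W|$ divisible by $4$, I partition $W$ into 4-sets $T_1,\ldots,T_r$, $r\leq\omega n/4$, and greedily assign a distinct gadget $S_i\in A$ to each $T_i$, swapping $S_i$'s self-tiling for the extended tiling of $S_i\cup T_i$. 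At step $i$, at most $c\cdot i \leq c\omega n$ vertices are forbidden, blocking at most an $O(c^2\omega/\alpha)$-fraction of candidate gadgets for $T_i$; for $\omega$ small, this is $<1$ and a valid $S_i$ exists.

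The main obstacle is the uniform $\Omega(n^c)$ lower bound in Step (ii) under only $\delta_2(G)\geq\delta n$ with $\delta>0$ arbitrary. Several ``spans $D$'' conditions --- particularly those for the self-tiling of $S$ --- would naively require a vertex in the intersection of two pair-co-neighborhoods, which can be empty for small $\delta$. The standard workaround is to enlarge $c$, inserting buffer vertices that act as ``links'' decoupling the constraints, so that each new vertex is constrained by only a single pair's co-neighborhood; the count then remains $\Omega(n^c)$ for a larger but still constant $c$.
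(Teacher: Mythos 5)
Your overall architecture is the standard R\"odl--Ruci\'nski--Szemer\'edi scheme that the paper also uses (many absorbers per 4-set, a random selection, then greedy absorption), but there is a genuine gap at the heart of the argument, and you have in fact located it yourself: the uniform lower bound of $\Omega(n^{|S|})$ absorbers per 4-set under the hypothesis $\del_2(G)\geq \del n$ with $\del$ arbitrarily small. Your proposed workaround --- enlarging the gadget and inserting ``buffer'' vertices so that each new vertex is constrained by only one previously chosen pair --- does not resolve the difficulty. The two required tilings (of $G[S]$ and of $G[S\cup T]$) must differ, so some vertex of $S$ necessarily serves in triples built on two different pairs across the two tilings; tracing the constraints, at least one selected vertex must lie in the intersection of the co-neighborhoods of two pairs neither of which you are free to choose (already in your own gadget, $b_1$ must lie in $N(v_3,v_4)$ for the extended tiling and in a co-neighborhood determined by $\{a_1,a_2\}$ or by part of $D_1$ for the self-tiling). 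When $\del<1/2$ such an intersection can be empty, and no amount of chain-lengthening removes the need to close the loop somewhere. The paper's resolution is a supersaturation argument: it sets $V_1=N(u_1,u_2)$, $V_2=N(u_3,u_4)$ and $V_3=\bigcup\{N(v_1,v_2):(v_1,v_2)\in V_1\times V_2\}$, extracts pairwise disjoint $W_i\subset V_i$ with $|G[W_1,W_2,W_3]|\geq c_1n^3$, and applies Erd\H{o}s's theorem to obtain $\Omega(n^9)$ copies of $K^{3}_{3,3,3}$ therein; eight of the nine vertices of such a copy form the absorber, the complete tripartite structure supplying all the pair-sharing triples needed for both tilings simultaneously. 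This is the key idea your proposal is missing, and without it (or a substitute) the count in your Step (ii) is unproved.

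A secondary issue is the construction of $A$: you sample vertices with probability $\alpha/2$ and then take a maximal disjoint family of gadgets inside the sample. Maximality alone does not guarantee that each 4-set $T$ retains many absorbers \emph{within the chosen disjoint family} --- the absorbers of $T$ lying in the sample could pairwise intersect and be almost entirely excluded from any particular maximal family. The paper instead samples $8$-sets directly with probability $\Theta(n^{-7})$, so that the expected number of intersecting pairs of sampled sets is a sufficiently small multiple of $n$, and then deletes one set from each intersecting pair; this deletion argument is what preserves a linear-in-$n$ count of absorbers for every $T$ inside the final matching.
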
  

Our proof of Lemma~\ref{lem:abs} will be based on 
Proposition~\ref{prop:abs}, for which we need the following definition.  
\begin{definition}  
\label{def:absorb}  
\rm 
Suppose $G$ is a 3-graph with vertex set $V$, and let $U \in \tbinom{V}{4}$.  We say that a set $S \in \tbinom{V \setminus U}{8}$ {\it absorbs}  
$U$ if $G[S]$ is $D$-tileable and $G[S \cup U]$ is $D$-tileable.
\end{definition}

\begin{proposition}
\label{prop:abs}  
For all $\del > 0$, there exists $\sig > 0$
so that for all sufficiently large integers $n$, the following holds.  Suppose $G$ is a 3-graph 
with vertex set $V$ 
of order $|V| = n$ for which $\del_2(G) \geq \del n$.
For each $U \in \tbinom{V}{4}$, there are $\sig n^8$ sets $S \in \tbinom{V}{8}$ which absorb $U$.  
\end{proposition}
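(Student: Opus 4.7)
Fix $U = \{u_1, u_2, u_3, u_4\} \in \binom{V}{4}$.  The plan is to exhibit, for some choice of pairing of the vertices of $U$, an explicit $8$-vertex absorber structure and to show that it is realised by $\Omega(n^8)$ ordered tuples of vertices of $V \setminus U$; dividing by $8!$ then yields $\sigma n^8$ unordered absorbing $8$-sets.  Focusing first on the pairing $\{\{u_1, u_2\}, \{u_3, u_4\}\}$, abbreviate $N_1 = N_G(u_1, u_2)$ and $N_2 = N_G(u_3, u_4)$, each of size at least $\delta n$.  Call an ordered tuple $(a_1, a_2, b_1, b_2, c_1, c_2, c_3, c_4)$ of $8$ distinct vertices of $V \setminus U$ a \emph{$U$-absorber for this pairing} if (i) $\{u_1, u_2, a_1, a_2\}$ spans $D$, (ii) $\{u_3, u_4, b_1, b_2\}$ spans $D$, (iii) $\{a_1, a_2, b_1, b_2\}$ spans $D$, and (iv) $\{c_1, c_2, c_3, c_4\}$ spans $D$.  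Then $\{a_1, a_2, b_1, b_2\} \cup \{c_1, c_2, c_3, c_4\}$ tiles $G[S]$ while $\{u_1, u_2, a_1, a_2\} \cup \{u_3, u_4, b_1, b_2\} \cup \{c_1, c_2, c_3, c_4\}$ tiles $G[S \cup U]$, so $S = \{a_1, \ldots, c_4\}$ absorbs $U$ in the sense of Definition~\ref{def:absorb}.

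I would count ordered $U$-absorbers in three stages.  Stage~1: pick $(a_1, a_2) \in N_1^2$ with $a_1 \neq a_2$, giving $\geq \delta n(\delta n - 1)$ options and realising~(i) via the core $\{u_1, u_2\}$.  Stage~2: pick $(b_1, b_2) \in N_2^2$ disjoint from previous choices, with both~(ii) (automatic via core $\{u_3, u_4\}$) and~(iii) holding.  Stage~3: pick an ordered $4$-tuple $(c_1, c_2, c_3, c_4)$ disjoint from all previously chosen vertices with $\{c_1, c_2, c_3, c_4\}$ spanning $D$; since the number of ordered $D$-copies in $G$ is at least $\sum_{(x,y) \in V^2,\, x \neq y} |N_G(x,y)|(|N_G(x,y)|-1) = \Omega(\delta^2 n^4)$ and excluding $10$ vertices costs only $O(n^3)$, this stage contributes $\Omega(n^4)$ tuples.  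The bottleneck is therefore to show that Stages~1 and~2 together produce $\Omega(n^4)$ ordered pairs $\bigl((a_1, a_2), (b_1, b_2)\bigr)$ satisfying (i)--(iii); equivalently, that
\[
T \;:=\; \#\bigl\{\bigl((a_1, a_2), (b_1, b_2)\bigr) \in N_1^2 \times N_2^2 : \{a_1, a_2, b_1, b_2\} \text{ spans } D\bigr\}
\]
is of order $n^4$.

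The main obstacle is the ``cross'' condition~(iii): $N_1$ and $N_2$ are a~priori unrelated co-degree neighbourhoods, and for some $G$ --- quasi-bipartite configurations in which $u_1, u_2$ lie in an ``$A$-side'' and $u_3, u_4$ in a ``$B$-side'' of a tripartition $V = A \cup B \cup C$ --- one can arrange $T = 0$ for this pairing: no $4$-set $\{a_1, a_2, b_1, b_2\}$ with $a_i \in N_1 \subset A$ and $b_i \in N_2 \subset B$ contains two edges of $G$, and even summing over all six possible cores of~(iii) yields nothing.  My resolution is to let the pairing of $U$ vary.  For each of the three perfect matchings $\pi$ of $U$ into two pairs, define an analogous count $T^{(\pi)}$ by replacing $N_1, N_2$ with the co-degree neighbourhoods of the pairs in $\pi$.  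A case analysis on the edge distribution of $G$ will show that at least one $\pi$ satisfies $T^{(\pi)} = \Omega(n^4)$: in the quasi-bipartite example above, the pairing $\{\{u_1, u_3\}, \{u_2, u_4\}\}$ works, since $u_1 \in A$, $u_3 \in B$ forces $N_G(u_1, u_3) \subset C$ of size at least $\delta n$ (and similarly for $N_G(u_2, u_4)$), so every admissible $4$-set $\{a_1, a_2, b_1, b_2\}$ sits inside $C$ where $G$ is dense and a positive fraction span $D$.  Within each case, the lower bound on $T^{(\pi)}$ will be obtained by double counting: for each of the six possible cores of the cross $4$-set, the contribution to $T^{(\pi)}$ decomposes as $\sum_{(x,y)} |N_G(x,y) \cap X| \cdot |N_G(x,y) \cap Y|$ with $X, Y \in \{N_1^{(\pi)}, N_2^{(\pi)}\}$, and Cauchy--Schwarz combined with the $\delta n$ co-degree bound yields $\Omega(n^4)$ on at least one summand.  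The technical heart of the argument is thus this three-case analysis over pairings, with double counting delivering the absorber count within each case.
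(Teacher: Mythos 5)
There is a genuine gap, and it sits exactly at the step you flag as ``the technical heart'' and then defer: the claim that for at least one of the three pairings $\pi$ of $U$, the count $T^{(\pi)}$ of cross $4$-sets $\{a_1,a_2,b_1,b_2\}$ spanning $D$ with $a_1,a_2,b_1,b_2$ drawn from the two codegree neighbourhoods is $\Omega(n^4)$. This claim is false. Take $V=S\cup T$ with $|S|=|T|=n/2$ and let $G$ consist of all crossing triples, i.e.\ all $e\in\binom{V}{3}$ with $1\le|e\cap S|\le 2$. Then $\delta_2(G)= n/2-2$ (so the hypothesis holds with $\delta$ close to $1/2$), and $G$ is not even close to extremal. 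But for $U\subset T$, every one of the six pairs of $U$ has neighbourhood exactly $S$, so for every pairing $\pi$ both neighbourhoods equal $S$; since $G[S]=\emptyset$ is $D$-free, no $4$-set with all four vertices in $S$ spans a $D$, and $T^{(\pi)}=0$ for all three pairings. Your Cauchy--Schwarz/double-counting plan cannot recover this, because the quantity being bounded is genuinely zero. (Decorating $S$ with a Steiner triple system gives the same conclusion with $G[S]$ nonempty.) So the difficulty is not merely that the two neighbourhoods may be ``unrelated''; they may coincide and induce a $D$-free subhypergraph, and no choice of pairing escapes this.

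The paper's proof avoids this by never asking the cross copy of $D$ to live inside $N(u_1,u_2)\cup N(u_3,u_4)$. It sets $V_1=N(u_1,u_2)$, $V_2=N(u_3,u_4)$ and, crucially, introduces a third class $V_3=\bigcup\{N(v_1,v_2):(v_1,v_2)\in V_1\times V_2\}$; the codegree condition then forces the tripartite $3$-graph $G[W_1,W_2,W_3]$ (for suitable disjoint $W_i\subset V_i$) to have $\Omega(n^3)$ edges, and Erd\H{o}s's supersaturation theorem yields $\Omega(n^9)$ copies of $K^3_{3,3,3}$. Eight of the nine vertices of such a copy form the absorber: the many pairs of triples sharing two vertices inside a $K^3_{3,3,3}$ supply all the copies of $D$ needed for both tilings. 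In the counterexample above this works because $V_3\supseteq T$ and the triples $S$--$S$--$T$ are all present. To repair your argument you would need to replace condition~(iii) by a structure that routes through common neighbours of pairs $(a,b)\in N_1\times N_2$ (a third, derived vertex class), which is essentially the paper's construction; as written, Stages~1--2 cannot be completed.
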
  



To prove Proposition~\ref{prop:abs}, we require the following well-known `supersaturation' result of Erd\H{o}s~\cite{erdos}
(stated here only in special case form).  

\begin{theorem}[Erd\H{o}s~\cite{erdos}]  
\label{thm:erd}  
For all $c_1 > 0$ there exists $c_2 > 0$ so that for all sufficiently large integers $n$, the following holds.
If $H$ is a 3-graph of order $n$ and size $|H| \geq c_1 n^3$, then $H$ contains at least $c_2 n^9$ copies of 
$K^{3}_{3,3,3}$ (the balanced complete 3-partite 3-graph of order 9).    
\end{theorem}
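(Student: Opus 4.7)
The plan is to deduce this supersaturation result from Erd\H{o}s' classical extremal theorem for complete $3$-partite $3$-graphs from the same paper~\cite{erdos}, via a standard random-sampling argument. The extremal theorem guarantees $\mathrm{ex}(q, K_{3,3,3}^3) = o(q^3)$; in particular, for any $c_1 > 0$ there exists an integer $q_0 = q_0(c_1)$ such that every $3$-graph on $q \geq q_0$ vertices with at least $(c_1/2)\,q^3$ edges contains a copy of $K_{3,3,3}^3$. We take this as a black box and fix such a $q = q(c_1)$.

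The core step is to sample a uniformly random $q$-subset $Q \subseteq V(H)$ and count the number $X := |H[Q]|$ of edges of $H$ lying inside $Q$. Linearity of expectation gives
\[
\bbE[X] \;=\; |H| \cdot \frac{\binom{n-3}{q-3}}{\binom{n}{q}} \;=\; \binom{q}{3} \cdot \frac{|H|}{\binom{n}{3}} \;\geq\; c_1 q^3 \bigl(1 - o(1)\bigr),
\]
with $q$ fixed and $n \to \infty$, so in particular $\bbE[X] \geq (3c_1/4)\,q^3$ for $n$ sufficiently large. Since $X \leq \binom{q}{3} \leq q^3/6$ deterministically, a routine reverse-Markov argument implies that the proportion $\eta = \eta(c_1) > 0$ of $q$-subsets $Q$ satisfying $X \geq (c_1/2)\,q^3$ is bounded below by a positive constant. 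By our choice of $q$, each such ``good'' $q$-subset $Q$ contains at least one copy of $K_{3,3,3}^3$ inside $H[Q]$.

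It remains to convert existence into an explicit count. We double-count pairs $(K, Q)$, where $K$ is a copy of $K_{3,3,3}^3$ in $H$ and $Q$ is a $q$-subset of $V(H)$ containing $V(K)$. On the one hand, each such $K$ lies in exactly $\binom{n-9}{q-9}$ many $q$-subsets; on the other hand, the total number of such pairs is at least $\eta \binom{n}{q}$ (at least one per good $Q$). Rearranging, and using $\binom{n}{q}/\binom{n-9}{q-9} = \Theta_q(n^9)$, produces at least $c_2\, n^9$ copies of $K_{3,3,3}^3$ for some $c_2 = c_2(c_1) > 0$, as required.

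The real content of the theorem is Erd\H{o}s' original extremal result for $K_{3,3,3}^3$, proved via a K\H{o}v\'{a}ri--S\'{o}s--Tur\'{a}n-style double count lifted to $3$-graphs; the supersaturation wrapper sketched above is the now-standard averaging trick of Erd\H{o}s, and presents no substantive obstacle beyond verifying the routine numerical inequalities.
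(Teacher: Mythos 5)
Your proposal is correct. The paper itself gives no proof of this statement --- it is quoted as a black-box citation to Erd\H{o}s' 1964 paper --- and your derivation is the standard one: take the extremal bound $\mathrm{ex}(q,K^3_{3,3,3})=o(q^3)$ as given, sample a random $q$-set, apply reverse Markov to get a positive proportion of ``good'' $q$-sets each containing a copy, and double-count pairs $(K,Q)$ to obtain $c_2 n^9$ copies; all the numerical steps check out. One cosmetic remark: the factor you write as $1-o(1)$ in $\mathbb{E}[X]\geq c_1q^3(1-o(1))$ is really $q(q-1)(q-2)/q^3$, an error term in the fixed parameter $q$ rather than in $n$, but since $q=q(c_1)$ may be chosen large this does not affect the argument.
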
  

\begin{proof}[Proof of Proposition~\ref{prop:abs}]

Let $\del > 0$ be given.  
Let $c_1 = \del^3 / 36$, and let $c_2 > 0$ be the constant guaranteed by Theorem~\ref{thm:erd}.  
We define $\sig = c_2$,  
and in all that follows, we take $n$ to be a sufficiently large integer.  
Let $G$ be a 3-graph with vertex set $V$ of order $|V| = n$ for which $\del_2(G) \geq \del n$.  Fix $U = \{u_1, u_2, u_3, u_4\} 
\subset V$.  We prove there are $\sig n^8$ 
sets $S \in \tbinom{V}{8}$ which absorb $U$.

To that end, define $V_1 = N(u_1, u_2)$, $V_2 = N(u_3, u_4)$ and 
$$
V_3 = \bigcup \big\{N(v_1, v_2): (v_1, v_2) \in V_1 \times V_2 \big\}.  
$$
Note that $V_1 \cup V_2 \cup V_3$ is not necessarily a partition, 
but it will not be difficult to find 
pairwise disjoint subsets $W_i \subset V_i$, $i = 1, 2, 3$, for which 
$|G[W_1, W_2, W_3]| \geq c_1 n^3$.  
To that end, let $W_1 \subset V_1 \setminus \{u_3, u_4\}$ be any set of size (exactly) $\lceil \del n / 3 \rceil$
(recall $|V_1| \geq \del n$).  
Let $W_2 \subset V_2 \setminus (W_1 \cup \{u_1, u_2\})$ be any set of size
(exactly) $\lceil \del n / 3 \rceil$ (recall $|V_2| \geq \del n$).  
Now, set $W_3 = V_3 \setminus (W_1 \cup W_2 \cup \{u_1, u_2, u_3, u_4\})$.  
Then, 
$$
|G[W_1,W_2,W_3]| = \sum_{(w_1, w_2) \in W_1 \times W_2} |N(w_1,w_2) \cap W_3|  
\geq 
\left\lceil \frac{\del n}{3} \right\rceil^2 \left( \del n - 
2\left\lceil \frac{\del n}{3} \right\rceil
- 4
\right)  
\geq 
\frac{\del^3n^3 }{36} = c_1 n^3.  
$$


Now, set $H = G[W_1, W_2, W_3]$, which we view as a hypergraph of order $n$.  Since $H$ has size 
$|H| \geq c_1 n^3$, Theorem~\ref{thm:erd} guarantees that $H$ has at least $c_2 n^9 = \sig n^9$ copies of $K^{3}_{3,3,3}$.  
Note that each such copy has exactly 3 vertices in each of $W_1, W_2, W_3$ and that, for some fixed
$w_3 \in W_3$ (it doesn't matter which), at least $\sig n^8$ such copies 
contain the vertex $w_3$.  
Let $\{w_1, w_1', w_1'', w_2, w_2', w_2'', w_3, w_3', w_3''\}$ denote the vertex set of such a copy, where 
$w_i, w_i', w_i'' \in W_i$, $i = 1, 2, 3$.  
We claim that 
$$
S_U = 
S_U(w_3) = 
\{w_1, w_1', w_1'', w_2, w_2', w_2'', w_3', w_3''\} 
$$
absorbs the set $U$
(see Figure~\ref{fig-absorbing}).  
Indeed, 
$$
S_1:= \Big\{ \{w_1, w_2, w_3'\}, \, \{w_1', w_2, w_3'\} \Big\}\, ,  \quad 
S_2:= \Big\{ \{w_1'', w_2', w_3''\}, \, \{w_1'', w_2'', w_3''\} \Big\}  
$$
is a $D$-tiling of $G[S_U]$ and 
$$
T_1:=\Big\{ \{u_1, u_2, w_1\}, \, \{u_1, u_2, w_1'\} \Big\}\,,  ~ 
T_2:=\Big\{ \{u_3, u_4, w_2\}, \, \{u_3, u_4, w_2'\} \Big\}\,,  ~
T_3:=\Big\{ \{w_1'', w_2'', w_3'\}, \, \{w_1'',w_2'',w_3''\} \Big\}
$$
is a $D$-tiling of $G[S_U \cup U]$.  

\end{proof}

\begin{figure}
\begin{center}
\scalebox{1}{\includegraphics{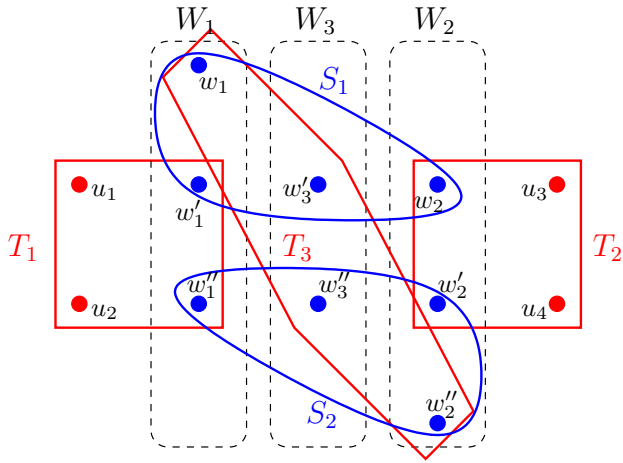}}
\caption{Absorbing structure.}\label{fig-absorbing}
\end{center}
\end{figure}

Finally we use Proposition \ref{prop:abs} to prove Lemma \ref{lem:abs}.

\begin{proof}[Proof of Lemma~\ref{lem:abs}]  
Let $\alp, \del > 0$ be given.  
Let $\sig = \sig (\del) > 0$ be the constant guaranteed by Proposition~\ref{prop:abs}.  
We define
\begin{equation}
\label{eqn:w}
\omega = \frac{\alp \sig^2}{128}.  
\end{equation}
In all that follows, we take $n$ to be a sufficiently large integer divisible by 4.  
Let $G$ be a given 3-graph with vertex set $V$ of order $|V| = n$ for which $\del_2 (G) \geq \del n$.  
We prove that $G$ admits a set $A \subset V$ described in the conclusion of Lemma~\ref{lem:abs}.  To 
produce the desired set $A$, we employ the well-known deletion method in probabilistic combinatorics.

To begin, 
set $p = (1/16) \alp \sig n^{-7}$, and let $\bbH = \bbH^{(8)}(n, p)$ be the binomial random 8-uniform hypergraph with $n$-element vertex set $V$.  
We note several basic properties of $\bbH$ (due to the Chernoff inequality, unless otherwise indicated):  
\begin{enumerate}
\item  
With probability $1 - \exp \{- n/\log n\}$, 
$$
|\bbH| \leq 2 p \binom{n}{8}   \leq \frac{1}{8}\alp n; 
$$
\item  
Let $\bbH \otimes \bbH = \{(S_1, S_2) \in \bbH \times \bbH: S_1 \cap S_2 \neq \emptyset\}$.  
Then, 
$$
\bbE \left[ |\bbH \otimes \bbH| \right] \leq 8\binom{n}{8} \binom{n}{7} p^2 \leq 
\frac{1}{256}  
\alp^2 \sig^2  n.    
$$
As such, by the Markov inequality, 
$$
\Pr \left[ |\bbH \otimes \bbH| \geq \frac{1}{128} \alp^2 \sig^2 n \right] \leq \frac{1}{2};    
$$
\item
For $U \in \tbinom{V}{4}$, let 
$\cA(U)$ be the collection of sets $S \in \tbinom{V}{8}$ which absorb $U$.  
By Proposition~\ref{prop:abs}, $|\cA(U)| \geq \sig n^8$, and so 
with probability $1 - \exp \{- n/\log n\}$, $\bbH$ satisfies that for every $U \in \tbinom{V}{4}$, 
$$
|\cA(U) \cap \bbH| \geq \frac{1}{2} 
p |\cA(U)| 
\geq 
\frac{1}{32}  
\alp \sig^2 n.   
$$
\end{enumerate}

Let $H$ be an instance of $\bbH$ for which properties~$(i)$--$(iii)$ hold (and specifically, where 
$|H \otimes H| < \alp^2 \sig^2 n/ 128$).  
Now, 
\begin{enumerate}
\item[$(a)$]  
delete any $S \in H$ for which there exists $S' \in H$ for which 
$S \cap S' \neq \emptyset$.  This deletes at most 
$$
2 \times \frac{\alp^2 \sig^2 n}{128} = 
\frac{\alp^2 \sig^2 n}{64} 
$$
elements $S \in H$;   
\item[$(b)$]  
delete any $S \in H$ 
for which no $U \in \tbinom{V}{4}$ has $S \in \cA(U)$.  
\end{enumerate}  
The resulting hypergraph is then, importantly, a (partial) matching $M$ in $V$.  Let $m:=|M|$, $\{S_1,\dots, S_m\}=M$, and $A:=\bigcup_{i=1}^mS_i$ (the set of vertices covered by $M$).  We now confirm that $A$ satisfies its claimed properties.  

Observe from~$(i)$ that $|A| = 8 |M| \leq \alp n$, as promised.  
Now, let $W \subset V \setminus A$ have size $4t:=|W| \leq \omega n$ 
(cf.~(\ref{eqn:w})) and then arbitrarily partition $W$ into 4-sets $\{W_1,W_2,\dots, W_{t}\}=:\mathcal{W}$.

Note that by (iii), (a), and \eqref{eqn:w} we have that for all $W_i\in \mathcal{W}$, $$|\mathcal{A}(W_i)\cap M|\geq \frac{1}{32} \alp \sig^2 n -\frac{1}{64} \alp^2 \sig^2 n \geq \frac{1}{64} \alp \sig^2 n\geq \frac{\omega n}{4}\geq t.$$
So for each $W_i\in \mathcal{W}$ we can greedily choose some unique $S_i'\in \mathcal{A}(W_i)\subseteq M$, which guarantees that each of $G[S_1' \cup W_1], \dots, G[S_t' \cup W_t]$ are $D$-tileable.  Finally, since $G[S]$ is $D$-tilable for all $S\in M$ (by (b) and Definition \ref{def:absorb}), and since $\{S_1,\dots, S_m, W_1,\dots, W_t\}$ is a partition of $A\cup W$, we infer that $G[A\cup W]$ is $D$-tileable as desired.

\end{proof}

\subsection*{Acknowledgements}

Our thanks to the referees for their thoughtful suggestions.

\end{document}